\newtheorem{theorem}[subsection]{Theorem}
\newtheorem{lemma}[subsection]{Lemma}
\newtheorem{proposition}[subsection]{Proposition}
\newcommand{\Harm}{{\rm Harm}}
\newcommand{\Mat}{{\rm Mat}}
\newcommand{\Orth}{{\rm O}}
\newcommand{\SL}{{\rm SL}}
\newcommand{\dee}{\partial}
\newcommand{\harm}{{\rm harm}}
\newcommand{\id}{{\rm id}}
\newcommand{\inv}{^{-1}}
\renewcommand{\phi}{\varphi}
\newcommand{\rk}{{\rm rk}}
\newcommand{\sca}[2]{\left\langle #1, #2 \right\rangle}
\newcommand{\edop}{{\mathbb E}}
\newcommand{\ndop}{{\mathbb N}}
\newcommand{\qdop}{{\mathbb Q}}
\newcommand{\rdop}{{\mathbb R}}
\newcommand{\zdop}{{\mathbb Z}}
\newcommand{\Bcal}{{\mathcal B}}
\author{Juan Marcos Cervi\~no}
\address{Fakult\"at f\"ur Mathematik, Universit\"at Duisburg-Essen,
45117 Essen, Germany}
\email{juan.cervino@uni-due.de}
\author{Georg Hein}
\address{Fakult\"at f\"ur Mathematik, Universit\"at Duisburg-Essen,
45117 Essen, Germany}
\email{georg.hein@uni-due.de}
\date{August 31, 2009}
\begin{document}
\title{lattice invariants from the heat kernel (II)}
\begin{abstract}
Given an integral lattice $\Lambda$ of rank $n$
and a finite sequence $m_1 \leq m_2
\leq \dots \leq m_k$ of natural numbers we construct a modular form 
$\Theta_{m_1,m_2,\dots,m_k,\Lambda}$ of level $N=N(\Lambda)$.
The weight of this modular form is $nk/2+\sum_{i=1}^k m_k$.
This construction generalizes the theta series $\Theta_\Lambda$ of
integral lattices, because $\Theta_\Lambda = \Theta_{0,\Lambda}$.\\
We give the $q$-expansions  of the modular forms $\Theta_{m,m,\Lambda}$,
and $\Theta_{1,1,1,\Lambda}$ and show that (up to some scaling) they are
given by power series with integer coefficients.\\
\end{abstract}

\subjclass[2000]{11F11, 11F27, 11E45}
\keywords{spherical theta functions, lattice invariants, modular form}
\maketitle
\section{Introduction}
For an integral lattice $\Lambda$ the theta series $\Theta_\Lambda(\tau)
= \sum_{\lambda \in \Lambda} \exp(2 \pi i \|\lambda\|^2 \tau)$ is a
first invariant. Moreover, $\Theta_\Lambda$ is a modular form of weight
$\rk(\Lambda)/2$ and level $N(\Lambda)$. Since the vector space of
modular forms of given weight and level is of finite dimension,
$\Theta_\Lambda$ can be read off from the first coefficients in its
$q$-expansion. Unfortunately, there are pairs $(\Lambda,\Lambda')$ of
lattices which possess the same theta series $\Theta_\Lambda =
\Theta_{\Lambda'}$ and are not isometric. A first example are the two
unimodular lattices $E_8 \oplus E_8$ and $E_{16}$ of rank 16 (see page
1243 in \cite{Elk2} for details). Schiemann constructed in \cite{Sch} an
example of two four dimensional lattices $(\Lambda,\Lambda')$ which are
isospectral (i.e. $\Theta_\Lambda = \Theta_{\Lambda'}$) but not
isometric.\\
Spherical theta functions $\Theta_{h,\Lambda} :=  \sum_{\lambda \in
\Lambda} h(\lambda)\exp(2 \pi i \|\lambda\|^2 \tau)$ define for
homogeneous harmonic polynomials $h$ also modular forms of level
$N(\Lambda)$. These modular forms depend on $h$ and are of weight
$\deg(h) +\rk(\Lambda)/2$. (The term {\em spherical theta functions}
appears in \cite{Zag} whereas Elkies uses {\em weighted theta function}
in \cite{Elk}.) The authors managed in \cite{CH} to find sums of
products of these spherical theta functions which give new lattice
invariants.  These modular forms can be used to distinguish the two
isospectral lattices in Schiemann's example (see Proposition 4.4 in
\cite{CH}).  In our article \cite{CH} we construct an invariant
$c_{m_1,m_2,\dots,m_k,\Lambda}$ which turns out to be a sum of products
of modular forms and their derivatives.  Out of these invariants we can
sometimes construct invariant harmonic data
$p_{m_1,m_2,\dots,m_k,\Lambda}$ which give invariant modular forms.
However, we computed only $p_{1,1,\Lambda}$ explicitly for lattices
$\Lambda$ of arbitrary rank in our article \cite{CH}.\\
The aim of this article is a direct construction of the invariant
harmonic datum $p_{m_1,m_2,\dots,m_k,\Lambda}$. So take an integral
lattice $\Lambda$ of rank $n$, let $N$ be the level of $\Lambda$, and
fix a finite sequence $0 \leq m_1 \leq m_2 \leq \dots \leq m_k$ of
integers.  We start with an isometric embedding $\Lambda \to \edop^n$ of
$\Lambda$ into the Euclidean space. Here we consider $\Lambda$ as a
distribution on the Schwartz functions on $\edop^n$. The heat flux of
this distribution is given by a function $f_\Lambda: \rdop^+ \times
\edop^n \to \rdop$. Using the {\em harmonic Taylor coefficients} of
$f_\Lambda$ we obtain the harmonic invariant system
$p_{m_1,m_2,\dots,m_k,\Lambda}$ which provides a modular form
$\Theta_{m_1,m_2,\dots,m_k,\Lambda}$ of level $N$ and weight
$nk/2+\sum_{i=1}^km_k$, independent from the chosen embedding
$\Lambda \to \edop^n$ (see Theorem \ref{thetam1mk}).\\
Next we give for all integers $m \geq 0$ the $q$-expansion of the
invariant modular forms $\Theta_{m,m,\Lambda} = \sum_{k \geq 0}
a_{m,m,k}q^k$. It turns out that the coefficients $a_{m,m,k}$ are given
by
\[ a_{m,m,k} = \sum_{(v,w) \in
\Lambda^2, \|v\|^2+\|w\|^2=k} p_m(\cos(\measuredangle
(v,w ))) \|v\|^{2m}\|w\|^{2m} \, ,\]
where $p_m$ is an even polynomial of degree $2m$ (see Theorem
\ref{main1}). We compute these polynomials in Lemma \ref{pm}. The first
ones being
\[ p_0(c) =1  \,,\,\,\,
p_1(c) = \frac{c^2}{2}-\frac{1}{2n} \, , \mbox{ and }
p_2(c) = \frac{c^4}{24}-\frac{c^2}{4(n+4)}+\frac{1}{8(n+4)(n+2))} \, .
\]
Knowing these polynomials we can give the modular forms $\Theta_{m,m,E_8}$
for the $E_8$ lattice for $m \leq 9$ in \ref{ex-e8}.
We conclude with computing the {\em triple modular invariant}
$\Theta_{1,1,1,\Lambda}$ in Theorem \ref{theta-111}.

\subsection*{Acknowledgment}
This work has been supported by the SFB/TR 45
``Periods, moduli spaces and arithmetic of algebraic varieties''.

\section{The modular forms $\Theta_{k_1,\dots,k_m,\Lambda}$}\label{THETAmm}
\subsection{Notation}
We consider a lattice $\Lambda \subset \edop^n$ embedded in the
$n$-dimensional euclidean space.
In \cite{CH} we defined the function $f_\Lambda:\rdop^+ \times \edop^n
\to \rdop$ by
\[ f_\Lambda(t,x) = {(4\pi t)^\frac{-n}{2}}\sum_{\gamma \in \Lambda}
\exp \left( \frac{-\|x-\gamma\|^2}{4t} \right) \, . \]
As explained in Section 2.1 of \cite{CH} this function describes the
heat flux of the lattice $\Lambda$.
We call a function $c_{\Lambda}:\rdop^+ \to \rdop$, which we obtain from
$f_\Lambda$, a lattice invariant if the action of the orthogonal group
$\Orth(n)$ on the isometric embeddings of $\Lambda \to \edop^n$ does not
change $c_{\Lambda}$. The first example for such a lattice invariant is
$c_{0,\Lambda}(t)=f_\Lambda(t,0)$.
It was shown in \cite[Section 2.10]{CH} that this function
$c_{0,\Lambda}$ determines the theta series of the lattice $\Lambda$.
We call a lattice $\Lambda$ integral if the square lengths $\|\gamma
\|^2$ are integers for all $\gamma \in \Lambda$.
An integral lattice $\Lambda$ has two integer invariants: its
discriminant $D = D(\Lambda)$, and the level $N=N(\Lambda)$.
We recall their definitions. If $\frac{1}{2}A \in \Mat_{n \times
n}(\rdop)$ is a symmetric Gram matrix for $\Lambda$, then $A \in \Mat_{n
\times n}(\zdop)$ and has even integers on its diagonal.
We set $D(L):= \det(A)$.
The smallest positive integer $N$ such
that $N\cdot A\inv \in \Mat_{n \times n}(\zdop)$ and $N\cdot A\inv $ has
even entries on the principal diagonal is called the level of $\Lambda$.

\subsection{Harmonic polynomials}\label{harm-num}
We list some well known properties of harmonic polynomials. For proofs see
\cite[Theorem 3.1]{Hel}, and
\cite[Chapter XIII, Exercises 33--35]{Lang}.
The ring of polynomial functions on
$\edop^n$ we denote by $A=\rdop[x_1,\dots,x_n]$.
The ring $A$ has a
natural grading $A= \oplus_{k \geq 0} A_k$ with $A_k$ the homogeneous
polynomials of degree $k$.
We define a pairing on
\[ A \times A \to \rdop \, ,\quad \langle g , f \rangle =
g\left(\frac{\dee}{\dee x_1},\dots \frac{\dee}{\dee x_n}\right)f |_0
\,.\]
The orthogonal group $\Orth(n)$ acts on $\edop^n$, defining an action on
the polynomials $\sigma(f)(x):=f(\sigma\inv(x))$
It is convenient, to recall some well known basic properties of this
pairing.
\begin{enumerate}
\item The pairing is a bilinear, symmetric, and positive definite.
\item For $I=(i_1,\dots,i_n) \in \ndop^n$ we set $x^I = \prod_{k=1}^n
x_k^{i_k}$, and $I!=\prod_{k=1}^n i_k!$. The normed monomials $\left\{
\frac{x^I}{\sqrt{I!}} \right\}_{I \in \ndop^n}$ form a orthonormal basis.
\item For two polynomials $P,Q \in A$ we have $\langle P \cdot Q ,f
\rangle= \langle P , Q\left(\frac{\dee}{\dee x_1},\dots \frac{\dee}{\dee
x_n}\right)f \rangle$. In particular, the Laplace operator $\Delta=
-\sum_{k=1}^n \frac{\dee^2}{\dee x_k^2}:A_m \to A_{m-2}$ has up to sign
the multiplication with $r^2=\sum_{k=1}^n x_k^2$ as adjoint.
\item The infinite dimensional representation $\Orth(n) \times A \to A$
is compatible with the grading of $A$ and with the inner product on
$A$. Therefore it defines finite
dimensional representations $\Orth(n) \to \Orth(A_m, \langle, \rangle )$
for all $m \geq 0$.
\item The kernel $\Harm_m$ of $\Delta:A_m \to A_{m-2}$ is an irreducible
representation. The vector space $\Harm_m$ of harmonic functions
is of dimension $\binom{n+m-1}{n-1}  -  \binom{n+m-3}{n-1}$. Its
orthogonal complement is $r^2 \cdot A_{m-2}$ which is an $\Orth(n)$
invariant subspace. In consequence, we obtain the decomposition of $A_m$
into irreducible subspaces
\[A_m = \bigoplus_{k=0}^{\lfloor m/2 \rfloor} r^{2k} \Harm_{m-2k}
\,.\]
\item For $h_1,h_2 \in \Harm_{2m-2k}$ we have the equality
\[ \langle r^{2k}h_1 , r^{2k}h_2 \rangle = a_{k,m}\sca{h_1}{h_2}
\mbox{ with } a_{k,m}=2^k k!
\prod_{l=1}^{k} (n+4m-2k-2l) \]
\item For $h \in \Harm_{m}$, and natural numbers $k,d \in \ndop$ we have
\[ r^{2k}\Delta^k(r^{2d}h) = b_{k,d,m} r^{2d}h \mbox{ with } b_{k,d,m} =
\prod_{l=0}^{k-1} (2l-2d)(n-2+2d-2l+2m)
\, . \]
Note that $b_{0,d,m}=1$, and $b_{k,d,m} = 0 \iff k >d$.
\item Using the above numbers $b_{k,d,m}$ we define for $m \geq 0$, and
$k=0, \dots, \lfloor m/2 \rfloor$ rational numbers $d_{k,m}$ by
the assignment $r_{0,m}:= 1$, and
\[r_{d,m} := \frac{-1}{b_{d,d,m-2d} }
\sum_{k=0}^{d-1} r_{k,m}b_{k,d,m-2d} \mbox{ for all integers }
d=1,\dots , \lfloor m/2 \rfloor  \, . \]
The use of the integers $b_{k,d,m}$ is not necessary. Using their
definition we obtain:
\[  r_{0,m} = 1 \mbox{ and by  }  r_{d,m} = - \sum_{k=0}^{d-1} \left(
\prod_{l=k}^{d-1} \frac{1}{(2l-2d)(n-2+2m-2d-2l)} \right) r_{k,m} \, .\]
From this, we derive in Lemma \ref{combi-1} the explicit formula:
\[ r_{k,m}\inv=2^k k! \prod_{l=0}^{k-1} (n+2m-4-2l) .\]
However, the first definition implies immediately that for all
$d \geq 1$ we have:
\[ \sum_{k=0}^{d} r_{k,m}b_{k,d,m-2d} =0 \, . \]
Therefore we conclude, that the linear map $P_{\harm,m} = \sum_{k \geq
0}^{m/2}r_{k,m} r^{2k} \Delta^k$ operates on $A_m$ as the harmonic
projection. Indeed, for a homogeneous harmonic function $h$ of degree
$m-2d$ we find that $P_{\harm,m}(r^{2d}h) = \left( \sum\limits_{k=0}^{d}
r_{k,m}b_{k,d,m-2d}\right) r^{2d}h$.
\item Let us explicitly give the harmonic projections in degree two, four
and six:
\[\begin{array}{rcl}
P_{\harm,2}& =&\id+\frac{1}{2n}r^2\Delta \\
P_{\harm,4} & =&
\id+\frac{1}{2(n+4)}r^2\Delta+\frac{1}{8(n+2)(n+4)}r^4\Delta^2\\
P_{\harm,6} & =&
\id+\frac{1}{2(n+8)}r^2\Delta+\frac{1}{8(n+6)(n+8)}r^4\Delta^2+
\frac{1}{48(n+4)(n+6)(n+8)}r^6\Delta^3\\
\end{array}
\]
\end{enumerate}

\subsection{Harmonic Taylor coefficients}
We consider the homogeneous parts of the Taylor expansion of $f_\Lambda$
at the point $x=0$. Since $f_\Lambda$ is symmetric in $x$, only the even
parts appear. We set
\[ f_{\Lambda,m} = \sum _{I \subset \ndop^n, |I|=2m} a_I \frac{x^I}{I!}
\mbox{
where for } I=(i_1,i_2,\dots,i_n) \,\,\,
I!:=\prod_{m=1}^n i_m!  \mbox{ ,  } x^I:=\prod_{m=1}^n x_m^{i_m}
\,\mbox{ , and }\]
\[ a_I:= \langle x^I , f_\Lambda \rangle = \frac{\dee^{i_1}}{\dee
x_1^{i_1}}
\frac{\dee^{i_2}}{\dee x_2^{i_2}} \cdots \frac{\dee^{i_n}}{\dee
x_n^{i_n}}f_\Lambda|_{\rdop^{+} \times \{0\}} \, .\]

The $f_{\Lambda,m}$ are homogeneous polynomials of degree $2m$ in the
$x_i$ over the ring of functions in $t$.
We may write
\[ f_{\Lambda,m} = \sum _{I \subset \ndop^n, |I|=2m}
\sca{\frac{x^I}{\sqrt{I!}}}{f_\Lambda}
\frac{x^I}{\sqrt{I!}} \,.\]
Indeed, the operator $f \mapsto \sum _{I \subset \ndop^n, |I|=2m}
\sca{\frac{x^I}{\sqrt{I!}}}{f_\Lambda}
\frac{x^I}{\sqrt{I!}}$ is the identity on $A_{2m}$ the space of
homogeneous polynomials of degree $2m$. So we can replace the
orthonormal basis $\left\{ \frac{x^I}{\sqrt{I!}} \right\}_{I \subset
\ndop^n, |I|=2m}$ by any other orthonormal basis.
If $\Bcal^\harm_{2m}$ is an orthonormal basis of $\Harm_{2m}$, the space of
harmonic polynomials of degree $2m$, then the projection of
$f_{\Lambda,m}$ to $\Harm_{2m}$ is called the {\em harmonic Taylor
coefficient} of $f_\Lambda$ and given by
\[f^\harm_{\Lambda,m} = \sum_{h \in \Bcal_{2m}^\harm} \sca{h}{f_\Lambda}h \,
.\]
We derive more formulas for $f^\harm_{\Lambda,m}$ which we will use in
the sequel. Taking any orthonormal basis $\Bcal_{2m}$ of $A_{2m}$ we
obtain $f^\harm_{\Lambda,m} = \sum_{g \in \Bcal_{2m}}
\sca{g}{f_\Lambda}P_\harm(g)$ where $P_\harm:A_{2m} \to A_{2m}$
denotes the orthogonal projection to the space of harmonic polynomials.
Therefore, we conclude $f^\harm_{\Lambda,m} = \sum_{g \in \Bcal_{2m}}
\sca{P^*_\harm(g)}{f_\Lambda}g$ with $P^*_\harm$ the adjoint operator of
$P_\harm$. Since an orthogonal projection is self adjoint we find that
$f^\harm_{\Lambda,m} = \sum_{g \in \Bcal_{2m}}
\sca{P_\harm(g)}{f_\Lambda}g$. Using the formula for the harmonic
projection developed in \ref{harm-num}.(8) we derive the next
\begin{proposition}\label{harm-tay}
Let $\Bcal$ be any orthonormal basis of $A_{2m}$. We have:
\[f^\harm_{\Lambda,m} = \sum_{g \in \Bcal_{2m}}
\sca{P_\harm(g)}{f_\Lambda}g = \sum_{g \in \Bcal_{2m}}
\sca{g}{P_\harm(f_\Lambda)}g= \qquad \qquad \qquad \qquad \qquad \qquad\]
\[\qquad \qquad = \sum_{g \in \Bcal_{2m}}
\sca{g}{\sum_{k=0}^m p_{k,2m} r^{2k}\Delta^k f_\Lambda}g = 
\sum_{g \in \Bcal_{2m}}
\sca{g}{\sum_{k=0}^m p_{k,2m} r^{2k}
(-1)^k\frac{\dee^k}{\dee t^k}f_\Lambda}g
\]
with the rational numbers $p_{k,2m}=\frac{1}{2^kk!} \prod_{l=0}^{k-1}
(n+4m-4-2l)\inv$ from
\ref{harm-num}.(8).
\end{proposition}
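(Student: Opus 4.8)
The plan is to establish the three displayed equalities in turn: the first by self-adjointness of the harmonic projection, the second by substituting its explicit differential-operator form from \ref{harm-num}.(8), and the third via the heat equation obeyed by $f_\Lambda$. The observation underlying all of them is that for $g \in A_{2m}$ the functional $\sca{g}{\,\cdot\,}$ detects only the degree-$2m$ homogeneous Taylor component of its argument: by \ref{harm-num}.(2) the pairing is diagonal on the normed monomial basis and vanishes across distinct degrees, so $\sca{g}{f_\Lambda} = \sca{g}{f_{\Lambda,m}}$, and likewise with any function in place of $f_\Lambda$.

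For the first equality I would start from the already-derived $f^\harm_{\Lambda,m} = \sum_{g\in\Bcal_{2m}}\sca{P_\harm(g)}{f_\Lambda}g$. Since $P_\harm(A_{2m})\subseteq A_{2m}$, the reduction gives $\sca{P_\harm(g)}{f_\Lambda} = \sca{P_\harm(g)}{f_{\Lambda,m}}$, and self-adjointness of the orthogonal projection $P_\harm$ on the finite-dimensional space $(A_{2m},\langle,\rangle)$ turns this into $\sca{g}{P_\harm(f_{\Lambda,m})}$. Reading $P_\harm$ henceforth as the operator $\sum_{k=0}^m p_{k,2m}r^{2k}\Delta^k$ of \ref{harm-num}.(8) — the coefficient $p_{k,2m}$ being exactly $r_{k,2m}$, as one sees from $r_{k,m}\inv = 2^k k!\prod_{l=0}^{k-1}(n+2m-4-2l)$ after the substitution $m\mapsto 2m$ — I note that each summand $r^{2k}\Delta^k$ preserves homogeneous degree, so the degree-$2m$ Taylor part of $r^{2k}\Delta^k f_\Lambda$ coincides with $r^{2k}\Delta^k f_{\Lambda,m}$. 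The reduction then yields $\sca{g}{P_\harm(f_{\Lambda,m})} = \sca{g}{\sum_{k=0}^m p_{k,2m}r^{2k}\Delta^k f_\Lambda}$, so that $\sum_g\sca{P_\harm(g)}{f_\Lambda}g$, $\sum_g\sca{g}{P_\harm(f_\Lambda)}g$ and $\sum_g\sca{g}{\sum_{k=0}^m p_{k,2m}r^{2k}\Delta^k f_\Lambda}g$ all coincide.

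The final equality is a substitution powered by the heat equation. With the sign convention $\Delta = -\sum_{k=1}^n \dee^2/\dee x_k^2$ from \ref{harm-num}.(3), and since $f_\Lambda$ is a superposition of Gaussian heat kernels, one has $\dee f_\Lambda/\dee t = \sum_{k=1}^n \dee^2 f_\Lambda/\dee x_k^2 = -\Delta f_\Lambda$; iterating gives $\Delta^k f_\Lambda = (-1)^k \dee^k f_\Lambda/\dee t^k$, and inserting this into the third expression produces the fourth.

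The main difficulty I anticipate is conceptual rather than computational: one must keep carefully apart the two roles of $P_\harm$, namely the self-adjoint orthogonal projection on the finite-dimensional $A_{2m}$, which drives the first equality, and the differential operator applied to the genuine, non-polynomial function $f_\Lambda$. The reconciliation rests entirely on the degree-preservation of the operators $r^{2k}\Delta^k$ together with the orthogonality of distinct graded components under $\langle,\rangle$, and the only real bookkeeping — verifying the coefficient identity $p_{k,2m}=r_{k,2m}$ — is where attention is required.
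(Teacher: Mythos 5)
Your proof is correct and follows essentially the same route as the paper's: self-adjointness of the orthogonal projection $P_\harm$ for the first equality, the explicit operator formula from \ref{harm-num}.(8) for the second, and the heat-equation identity $\Delta^k f_\Lambda = (-1)^k\frac{\dee^k}{\dee t^k}f_\Lambda$ for the last. The only difference is that you spell out, via the degree-preservation of $r^{2k}\Delta^k$ and the degree-selectivity of the pairing, why the projection may be applied to the non-polynomial $f_\Lambda$ --- a point the paper leaves implicit.
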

\begin{proof}
We have shown all equalities but the last one. This is a
consequence of the identity
$\Delta^kf_\Lambda = (-1)^k\frac{\dee^k}{\dee t^k}f_\Lambda$
(see 2.1 in \cite{CH}).
\end{proof}

\subsection{The invariant harmonic system $p_{m_1,\dots,m_k,\Lambda}$}
We define for any set of integers $m_1,\dots,m_k$ with $m_i \geq 0$ the
function
$p_{m_1,\dots,m_k,\Lambda}$ by
\[ p_{m_1,\dots,m_k,\Lambda} := \int_{S^{n-1}} f_{\Lambda,m_1}^\harm
\cdot f_{\Lambda,m_2}^\harm  \cdot \dots \cdot f_{\Lambda,m_k}^\harm d
\bar \mu \, . \]
If $\phi: \edop^n \to \edop^n$ is any isometry, then $\phi$ commutes
with the multiplication with $r^2$ as well as with $\Delta$. Whence it
commutes with the harmonic projection, which can be described in terms
of $r^2$ and $\Delta$. In consequence $p_{m_1,\dots,m_k,\phi(\Lambda) }
= p_{m_1,\dots,m_k,\Lambda}$.
Using the defining equation of $f_{\Lambda,m_i}$
we can write
\[ p_{m_1,\dots,m_k,\Lambda} = \sum_{h_1 \in \Bcal_{2m_1}^\harm \dots
h_k \in \Bcal_{2m_k}^\harm} \left( \prod_{l=1}^k \sca{h_l}{f_\Lambda}
\right) \int_{S^{n-1}} h_1 \cdot \ldots \cdot h_k d \bar \mu \, , \]
where $\Bcal^\harm_{2m_i}$ is an orthonormal basis of harmonic
polynomials of degree $2m_i$.
Since the $\int_{S^{n-1}} h_1 \cdot \ldots \cdot h_k d \bar \mu $ are
merely real numbers we obtain from this equation and the equality
$p_{m_1,\dots,m_k,\phi(\Lambda) } = p_{m_1,\dots,m_k,\Lambda}$ for all
isometries $\phi \in \Orth(n)$, that $p_{m_1,\dots,m_k,\Lambda}$ is an
invariant harmonic system for lattices $\Lambda \in \edop^n$ (see
\cite[2.8]{CH}). The Proposition 2.9 from \cite{CH} implies:

\begin{theorem}\label{thetam1mk}
For any integral lattice $\Lambda \subset \edop^n$ the modular form
\[ \Theta_{m_1,\dots,m_k,\Lambda} = \sum_{h_1 \in \Bcal_{2m_1}^\harm
\dots h_k \in \Bcal_{2m_k}^\harm}
\left( \prod_{l=1}^k \Theta_{h_l,\Lambda} \right)
 \int_{S^{n-1}} h_1 \cdot \ldots \cdot h_k d \bar \mu \]
is a modular form of weight $\frac{nk}{2}+2\sum_{l=1}^km_i$. 
The modular form is of level $N$, the level of the lattice $\Lambda$.
Furthermore, $\Theta_{m_1,\dots,m_k,\Lambda}$ is independent from the
chosen embedding $\Lambda \to \edop^n$. If $k$ is an odd number, then
$\Theta_{m_1,\dots,m_k,\Lambda}$ has character $\left(\frac{D}{\cdot}
\right)$. For $k$ an even integer $\Theta_{m_1,\dots,m_k,\Lambda}$ is a
modular form for the trivial character. \qed
\end{theorem}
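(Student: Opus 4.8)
The plan is to read every assertion off the presentation of $\Theta_{m_1,\dots,m_k,\Lambda}$ as a finite $\rdop$-linear combination of products of spherical theta functions, combining the modularity of the individual factors with the $\Orth(n)$-invariance established just above. First I would record that each factor $\Theta_{h_l,\Lambda}$ with $h_l \in \Bcal_{2m_l}^\harm$ is the spherical theta function attached to a harmonic polynomial of degree $2m_l$, hence a modular form of weight $2m_l + \frac{n}{2}$ and level $N = N(\Lambda)$, as recalled in the introduction following \cite{Elk} and \cite{Zag}. Because each harmonic degree $2m_l$ is \emph{even}, the multiplier of every such factor is one and the same quadratic character $\left(\frac{D}{\cdot}\right)$ attached to the discriminant $D = D(\Lambda)$; the spherical degree enters only the exponent $2m_l+\frac{n}{2}$ and leaves the character untouched.

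Next I would multiply and sum. The numbers $\int_{S^{n-1}} h_1 \cdots h_k\, d\bar\mu$ are real constants, so $\Theta_{m_1,\dots,m_k,\Lambda}$ is an $\rdop$-linear combination of the products $\prod_{l=1}^k \Theta_{h_l,\Lambda}$. A product of modular forms of level $N$ is again a modular form of level $N$, its weight the sum of the weights and its character the product of the characters, and a sum of forms of one fixed weight, level and character stays of that type. This already yields weight $\sum_{l=1}^k\bigl(2m_l+\frac{n}{2}\bigr)=\frac{nk}{2}+2\sum_{l=1}^k m_l$, level $N$, and character $\left(\frac{D}{\cdot}\right)^k$. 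As $\left(\frac{D}{\cdot}\right)$ is quadratic, $\left(\frac{D}{\cdot}\right)^k$ is the trivial character when $k$ is even and equals $\left(\frac{D}{\cdot}\right)$ when $k$ is odd, which is the asserted dichotomy.

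Finally, and this is the genuine content, I would establish independence of the embedding. A priori each summand depends on the chosen coordinates on $\edop^n$, both through the orthonormal bases $\Bcal_{2m_l}^\harm$ and through the spherical theta functions themselves. But we have already checked that $p_{m_1,\dots,m_k,\Lambda}$ is an invariant harmonic system, i.e. $p_{m_1,\dots,m_k,\phi(\Lambda)}=p_{m_1,\dots,m_k,\Lambda}$ for all $\phi\in\Orth(n)$, because the harmonic projection is assembled from the $\Orth(n)$-equivariant operators $r^2$ and $\Delta$. Proposition 2.9 of \cite{CH} converts such an invariant harmonic system into a modular form by substituting $\Theta_{h_l,\Lambda}$ for each evaluation $\sca{h_l}{f_\Lambda}$, and it guarantees that the outcome is independent of the basis and of the embedding; applied to $p_{m_1,\dots,m_k,\Lambda}$ it produces precisely $\Theta_{m_1,\dots,m_k,\Lambda}$. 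The step I expect to be the main obstacle is exactly this transfer of invariance: one must verify that an orthogonal change of embedding acts on the tuple $(h_1,\dots,h_k)$ so that the constants $\int_{S^{n-1}} h_1\cdots h_k\, d\bar\mu$ and the products $\prod_l \Theta_{h_l,\Lambda}$ transform compatibly and the total sum is unchanged. This compatibility is what the invariant-harmonic-system formalism of \cite{CH} encodes, so the argument reduces to citing Proposition 2.9 there, after which the weight, level and character are routine bookkeeping.
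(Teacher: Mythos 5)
Your proposal matches the paper's argument: the paper's entire proof consists of the preceding paragraph showing that $p_{m_1,\dots,m_k,\Lambda}$ is an invariant harmonic system (because the spherical integrals $\int_{S^{n-1}} h_1 \cdots h_k \, d\bar\mu$ are constants and the harmonic projection commutes with isometries) followed by an appeal to Proposition 2.9 of \cite{CH}, which is exactly the route you take. Your additional bookkeeping of weights, levels and characters of the individual factors $\Theta_{h_l,\Lambda}$ is a correct unpacking of what that cited proposition delivers.
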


The functions $\left\{ \frac{x^I}{\sqrt{I!}} \right\}_{I \subset
\ndop^n, |I|=2m}$ form an orthonormal basis of $A_{2m}$. Whereas an
orthonormal basis of the subspace $\Harm_{2m} \subset A_{2m}$ is more
difficult. However by Proposition \ref{harm-tay} we can compute
$p_{m_1,\dots,m_k,\Lambda}$ in a different manner:
\[  p_{m_1,\dots,m_k,\Lambda} = \sum_{g_1 \in \Bcal_{2m_1} \dots
g_k \in \Bcal_{2m_k}} \left( \prod_{l=1}^k \sca{P_\harm(g_l)}{f_\Lambda}
\right) \int_{S^{n-1}} g_1 \cdot \ldots \cdot g_k d \bar \mu \, , \]
with the $\Bcal_{2m_i}$ orthonormal basis of $A_{2m_i}$.
Applying \cite[Proposition 2.9]{CH} to this presentation of
$p_{m_1,\dots,m_k,\Lambda}$ we obtain the next

\begin{proposition}\label{alt-def}
The modular form $\Theta_{m_1,\dots,m_k,\Lambda}$ can be computed using
orthonormal basis $\Bcal_{2m_i}$ of $A_{2m_i}$, and the orthogonal
harmonic projections $P_\harm:A_{2m_i} \to A_{2m_i}$ as follows
\[ \Theta_{m_1,\dots,m_k,\Lambda} = \sum_{g_1 \in \Bcal_{2m_1}
\dots g_k \in \Bcal_{2m_k}}
\left( \prod_{l=1}^k \Theta_{P_\harm(g_l),\Lambda} \right)
\int_{S^{n-1}} g_1 \cdot \ldots \cdot g_k d \bar \mu \, . \]
\end{proposition}

\section{The modular forms $\Theta_{m,m,\Lambda}$ for integers $m
\geq 0$}
\subsection{Definition of $\Theta_{m,m,\Lambda}$}
On the real polynomials on $\edop^n$ we have two $\Orth(n)$-invariant scalar
products. The one defined in \ref{harm-num}, and the integral scalar
product $\sca{f}{g}_2:= \int_{S^{n-1}} fg d \bar \mu$. The first has the
advantage that $A_m \bot A_k$ for $m \ne k$. However, when we restrict
to the irreducible subspace $\Harm_{2m}$ the two scalar products agree
up to a constant $c_{2m}$, i.e. $\sca{f}{g}_2= c_{2m}\sca{f}{g}$
for all $f,g \in
\Harm_{2m}$.
The formula from Theorem \ref{thetam1mk} yields
\[ \begin{array}{rcl}
\Theta_{m,m,\Lambda}
& =& \displaystyle
\sum_{h_1 \in \Bcal_{2m}^\harm} \sum_{h_2 \in \Bcal_{2m}^\harm}
\Theta_{h_1,\Lambda} \Theta_{h_2,\Lambda} \int_{S^{n-1}} h_1h_2 d \bar
\mu \\
& =& \displaystyle
\sum_{h_1 \in \Bcal_{2m}^\harm} \sum_{h_2 \in \Bcal_{2m}^\harm}
\Theta_{h_1,\Lambda} \Theta_{h_2,\Lambda}
\sca{h_1}{h_2}_2\\
& =& \displaystyle
c_{2m} \sum_{h \in \Bcal_{2m}^\harm} 
\Theta_{h,\Lambda}^2 \, .
\end{array}\]
A straightforward computation using \cite[Corollary A.2]{CH} yields
$c_{2m}=\prod\limits_{k=0}^{2m-1}\frac{1}{n+2k}$.
However, to ease notation we define $\Theta_{m,m,\Lambda}:= \sum_{h \in
\Bcal_{2m}^\harm} \Theta_{h,\Lambda}^2$.

\begin{lemma}\label{aux-32}
Let $v$ and $w$ to vectors in $\edop^n$, $\Bcal_{2m}^\harm$ be a
orthonormal basis of $\Harm_{2m}$, and $c=\cos(\measuredangle
(v,w )) = \frac{\sca{v}{w}}{\|v\|\|w\|}$ be the cosine of the
angle between $v$ and $w$. We have
\[ \sum_{h \in \Bcal_{2m}^\harm} h(v)h(w) = p_m(c)\|v\|^{2m}\|w\|^{2m}
\]
where $p_m$ is the even polynomial of degree $2m$ from Lemma \ref{pm}.
\end{lemma}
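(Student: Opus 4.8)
The plan is to recognise the left-hand side as the reproducing kernel of $\Harm_{2m}$ for the differential pairing $\sca{\cdot}{\cdot}$ of \ref{harm-num}, and to evaluate it by harmonically projecting the (elementary) reproducing kernel of the ambient space $A_{2m}$. First I would record that
\[ K(v,w) := \sum_{h \in \Bcal^\harm_{2m}} h(v)h(w) \]
is independent of the chosen orthonormal basis: it is characterised by $\sca{f}{K(v,\cdot)} = f(v)$ for all $f \in \Harm_{2m}$, which follows from the symmetry of the pairing together with the expansion $f = \sum_h \sca{f}{h}\,h$. Carrying out the same construction for the full space $A_{2m}$ in the orthonormal monomial basis $\{x^I/\sqrt{I!}\}_{|I|=2m}$ of \ref{harm-num}.(2) gives the ambient reproducing kernel
\[ R(v,w) = \sum_{|I|=2m} \frac{v^I w^I}{I!} = \frac{\sca{v}{w}^{2m}}{(2m)!} \, , \]
the last equality being the multinomial expansion of $\sca{v}{w}^{2m} = \left(\sum_k v_k w_k\right)^{2m}$.

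Next I would relate the two kernels by $K(v,\cdot) = P_\harm\!\left(R(v,\cdot)\right)$, the harmonic projection applied in the second variable. Since $P_\harm$ is an orthogonal, hence self-adjoint, projection (\ref{harm-num}.(8)) and $\Harm_{2m}$ carries the restricted pairing, one has for every $f \in \Harm_{2m}$ the chain
\[ \sca{f}{P_\harm R(v,\cdot)} = \sca{P_\harm f}{R(v,\cdot)} = \sca{f}{R(v,\cdot)} = f(v) \, ; \]
as $P_\harm R(v,\cdot) \in \Harm_{2m}$, it must equal $K(v,\cdot)$ by the reproducing characterisation above.

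The computation then becomes explicit. Writing $u = \sca{v}{w}$ as a polynomial in $w$ and using $\Delta = -\sum_k \dee^2/\dee x_k^2$, the identity $\Delta_w u^p = -p(p-1)\|v\|^2 u^{p-2}$ iterates to
\[ \Delta_w^k u^{2m} = (-1)^k \|v\|^{2k}\,\frac{(2m)!}{(2m-2k)!}\,u^{2m-2k} \, . \]
Applying $P_{\harm,2m} = \sum_{k=0}^m r_{k,2m}\, r^{2k}\Delta^k$ from \ref{harm-num}.(8), with $r^2 = \|w\|^2$ in the $w$-variable, and substituting $\sca{v}{w} = \|v\|\,\|w\|\,c$, every power of $\|v\|$ and $\|w\|$ collects to $\|v\|^{2m}\|w\|^{2m}$, leaving
\[ K(v,w) = \|v\|^{2m}\|w\|^{2m} \sum_{k=0}^m \frac{(-1)^k r_{k,2m}}{(2m-2k)!}\, c^{2m-2k} \, . \]
This exhibits $p_m(c) = \sum_{k=0}^m \frac{(-1)^k r_{k,2m}}{(2m-2k)!}\, c^{2m-2k}$ as an even polynomial whose leading term $c^{2m}/(2m)!$ has degree exactly $2m$; feeding in the closed form of $r_{k,2m}$ from Lemma \ref{combi-1} recovers the polynomials of Lemma \ref{pm}.

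The one step requiring genuine care—the main obstacle—is the kernel identity $K(v,\cdot) = P_\harm\!\left(R(v,\cdot)\right)$, namely that restricting the ambient reproducing kernel through the self-adjoint harmonic projection in a single variable yields the reproducing kernel of the subspace $\Harm_{2m}$. Once this is in place, everything reduces to the bookkeeping with $\Delta_w^k u^{2m}$ indicated above.
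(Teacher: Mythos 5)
Your proof is correct, but it takes a genuinely different route from the paper's. The paper argues by induction on $m$: it computes the full kernel $D_m(v,w)=\sum_{g\in\Bcal_{2m}}g(v)g(w)=\sca{v}{w}^{2m}/(2m)!$ in the monomial basis, re-expands $D_m$ along the orthogonal decomposition $A_{2m}=\bigoplus_k r^{2k}\Harm_{2m-2k}$ using the constants $a_{k,m}$ of \ref{harm-num}.(6), and then peels off the top harmonic piece via the induction hypothesis together with the \emph{implicit} recursion $\sum_k p_{m-k}/a_{k,m}=c^{2m}/(2m)!$ that defines $p_m$ in Lemma \ref{pm}. You instead identify $\sum_h h(v)h(w)$ as the reproducing kernel of $\Harm_{2m}$ for the differential pairing, prove the projection identity $K(v,\cdot)=P_\harm\bigl(R(v,\cdot)\bigr)$ via self-adjointness of $P_\harm$ (both facts are available from \ref{harm-num}.(5) and the Harmonic Taylor coefficients subsection), and then evaluate $P_{\harm,2m}$ explicitly on $\sca{v}{w}^{2m}$ using $\Delta_w^k\sca{v}{w}^{2m}=(-1)^k\|v\|^{2k}\frac{(2m)!}{(2m-2k)!}\sca{v}{w}^{2m-2k}$; all the computations check out, and with $r_{k,2m}\inv=2^kk!\prod_{l=0}^{k-1}(n+4m-4-2l)$ from Lemma \ref{combi-1} your output is literally formula (\ref{form-1}). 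What each approach buys: yours is non-inductive and produces the closed form of $p_m$ directly, so combined with the recursion it would actually give an alternative proof of Lemma \ref{pm}; the paper's induction avoids the operator form of $P_\harm$ and the kernel formalism, needing only the structure constants $a_{k,m}$ and the defining recursion for $p_m$. The one step you flag as delicate, $K(v,\cdot)=P_\harm(R(v,\cdot))$, is indeed the crux of your argument, and your uniqueness-of-the-Riesz-representative justification for it is sound in this finite-dimensional setting.
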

\begin{proof}
We proceed by induction on $m$. For $m=0$ the statement is obvious.
Let $\Bcal_{2m}$ be a orthonormal basis of $A_{2m}$. We want to compute
$D_m(v,w) = \sum_{h \in \Bcal_{2m}} h(v)h(w)$. Since $\Orth(n)$ acts
orthogonal on $A_{2m}$, this number is independent of the chosen basis.
In particular, we can take $\Bcal_{2m} = \left\{ \frac{x^I}{\sqrt{I!}}
\right\}_{|I| = 2m }$. With this choice we find $D_m(v,w) =
\frac{\sca{v}{w}^{2m}}{(2m!)}$. Another choice for an orthonormal basis
is by \ref{harm-num}.(6) 
\[\Bcal_{2m}'= B_{2m}^\harm  \cup
\frac{1}{\sqrt{a_{1,m}}} r^2 B_{2m-2}^\harm \cup \dots \cup
\frac{1}{\sqrt{a_{m,m}}}
r^{2m}B^\harm_0 \, , \]
where the numbers $a_i,m$ are those defined in \ref{harm-num}.(6).
This basis corresponds to the irreducible
decomposition $A_{2m} = \bigoplus_{k=0}^m r^{2k} \Harm_{2m-2k}$.
Working with this  orthonormal basis we deduce
\[D_m(v,w) = \sum_{k=0}^m \frac{1}{a_{k,m}} \sum_{h \in B^\harm_{2m-2k}
} h(v)h(w) \, . \]
The defining equation for the polynomials $p_m$, Lemma \ref{pm}, and the
induction hypothesis yield the stated formula for $p_m$.
\end{proof}

\begin{theorem}\label{main1}
For an integer lattice $\Lambda \subset \edop^n$ the modular form
\[ \Theta_{m,m,\Lambda} = \sum_{h \in \Bcal_{2m}^\harm}
\Theta_{h,\Lambda}^2 \]
is of weight $4m+n$, has level $N(\Lambda)$, and is independent of the
chosen embedding. Its $q$-expansion is given by
\[ \Theta_{m,m,\Lambda} (\tau) = \sum_{k \geq 0} \left( \sum_{(v,w) \in
\Lambda^2, \|v\|^2+\|w\|^2=k} p_m(\cos(\measuredangle
(v,w ))) \|v\|^{2m}\|w\|^{2m} \right) q^k \, . \]
For $m >0$ we have,
$ \frac{(2m)!2^{2m}\prod_{l=0}^{m-1} (n+4m-4-2l) }{q^{2l}}
\Theta_{m,m,\Lambda} \in \zdop[[q]]$
where $l$ denotes the minimum of $\|v\|^2$ for all nonzero $v \in
\Lambda$.
\end{theorem}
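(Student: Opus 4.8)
The first three assertions I would dispatch immediately. By Theorem \ref{thetam1mk} applied with $k=2$ and $m_1=m_2=m$, the form $\Theta_{m,m,\Lambda}$ has weight $\tfrac{2n}{2}+2(m+m)=4m+n$ and level $N(\Lambda)$ and is independent of the embedding; the rewriting $\Theta_{m,m,\Lambda}=\sum_{h\in\Bcal_{2m}^\harm}\Theta_{h,\Lambda}^2$ was carried out just before Lemma \ref{aux-32}. Expanding $\Theta_{h,\Lambda}=\sum_{\lambda\in\Lambda}h(\lambda)q^{\|\lambda\|^2}$, squaring, and summing over the orthonormal basis gives $\Theta_{m,m,\Lambda}=\sum_{v,w\in\Lambda}\bigl(\sum_{h}h(v)h(w)\bigr)q^{\|v\|^2+\|w\|^2}$, and Lemma \ref{aux-32} identifies the inner sum with $p_m(\cos\measuredangle(v,w))\|v\|^{2m}\|w\|^{2m}$. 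This is exactly the claimed $q$-expansion, so only the integrality statement remains.

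Write $C_m:=(2m)!\,2^{2m}\prod_{l=0}^{m-1}(n+4m-4-2l)$ and let $a_{m,m,k}$ be the $k$-th coefficient. The plan is to show two things: (i) $a_{m,m,k}=0$ for $k<2l$, so that $q^{-2l}\Theta_{m,m,\Lambda}$ is an honest element of $\rdop[[q]]$; and (ii) $C_m\,a_{m,m,k}\in\zdop$ for every $k$. Part (i) is easy: for $m>0$ each summand carries the factor $\|v\|^{2m}\|w\|^{2m}$, which vanishes unless both $v$ and $w$ are nonzero, and for such a pair $\|v\|^2+\|w\|^2\ge 2l$ with $l=\min\{\|v\|^2:0\ne v\in\Lambda\}$. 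Hence the lowest occurring power of $q$ is $2l$, and division by $q^{2l}$ leaves the constant term $\sum_{\|v\|^2=\|w\|^2=l}p_m(\cos\measuredangle(v,w))\,l^{2m}$ together with higher-order terms.

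For (ii) I would argue term by term in the finite sum defining $a_{m,m,k}$. Set $s=\sca{v}{w}$, $P=\|v\|^2$, $Q=\|w\|^2$. Writing $p_m(c)=\sum_{i=0}^m\gamma_i\,c^{2i}$ and substituting $c=s/\sqrt{PQ}$ turns each summand into the polynomial expression $\sum_{i=0}^m\gamma_i\,s^{2i}(PQ)^{m-i}$ in the three invariants. Since $\Lambda$ is integral, $P,Q\in\zdop$ and $2s=\|v+w\|^2-P-Q\in\zdop$, so $s^{2i}\in 2^{-2i}\zdop$; as $i\le m$, the factor $2^{2m}$ in $C_m$ already absorbs every power of $2$. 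Everything then reduces to the arithmetic assertion that $(2m)!\prod_{l=0}^{m-1}(n+4m-4-2l)\cdot\gamma_i\in\zdop$ for each $i$. Using the explicit coefficients furnished by Lemma \ref{pm}, namely $\gamma_i=\frac{(-1)^{m-i}}{2^{m-i}(m-i)!\,(2i)!\,\prod_{l=0}^{m-i-1}(n+4m-4-2l)}$, the $n$-product telescopes, leaving $\pm\binom{2m}{2i}(2m-2i-1)!!\prod_{l=m-i}^{m-1}(n+4m-4-2l)$, where I invoke the identity $\frac{(2m)!}{(2i)!(m-i)!\,2^{m-i}}=\binom{2m}{2i}(2m-2i-1)!!$, itself a consequence of $(2m-2i)!=2^{m-i}(m-i)!\,(2m-2i-1)!!$. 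This is visibly an integer, so each term, and hence the finite sum $C_m\,a_{m,m,k}$, lies in $\zdop$.

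I expect the \emph{main obstacle} to be pinning down the exact denominators of $\gamma_i$: once Lemma \ref{pm} supplies the closed form, all of step (ii) is bookkeeping, so the real content sits in Lemma \ref{pm} and the companion Lemma \ref{combi-1}. Should one wish to avoid the closed form, the same denominator bound can be extracted inductively from the triangular relation $\frac{s^{2m}}{(2m)!}=\sum_{k=0}^m a_{k,m}\inv H_{m-k}$ established in the proof of Lemma \ref{aux-32}, tracking how the weights $a_{k,m}=2^k k!\prod_{l=1}^k(n+4m-2k-2l)$ contribute powers of $2$, factorials, and shifted products in $n$; the delicate point there is to verify that the accumulated $n$-factors coincide with $\prod_{l=0}^{m-1}(n+4m-4-2l)$ and not some strictly larger product, which is precisely the sharpness of the stated scaling constant.
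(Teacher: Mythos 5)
Your proposal is correct and follows essentially the same route as the paper: the weight/level/independence claims and the $q$-expansion are obtained exactly as in the paper (squaring the spherical theta series and invoking Lemma \ref{aux-32}), and the integrality is proved by the same substitution of the explicit formula for $p_m$ from Lemma \ref{pm} into $C_m\,p_m(\cos\measuredangle(v,w))\|v\|^{2m}\|w\|^{2m}$, telescoping the $n$-product and using $2\sca{v}{w}\in\zdop$. The paper's version of your term-by-term bookkeeping is the single displayed formula for $\delta_m(v,w)$ with coefficients $(-1)^k\frac{(2m)!}{(2m-2k)!\,k!}2^{2m-k}$, which encodes the same power-of-two and binomial accounting you carry out.
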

\begin{proof}
We take the function $\Theta_{m,m,\Lambda} = \sum_{h \in
\Bcal_{2m}^\harm} \Theta_{h,\Lambda}^2 $. As a sum of squares of modular
forms of weight $2m+\frac{n}{2}$ it is a modular form  of weight$4m+n$.
Let us calculate the $q$-expansion:
\[ \Theta_{m,m,\Lambda}(\tau) = \sum_{h \in \Bcal_{2m}^\harm} 
\sum_{(v,w) \in \Lambda \times \Lambda } h(v)h(w) q^{\|v\|^2+\|w\|^2}
\quad \mbox{ with } q=\exp(2 \pi i \tau)
\,. \]
This yields by Lemma \ref{aux-32} the stated $q$-expansion. From the
$q$-expansion we directly deduce that $\Theta_{m,m,\Lambda}$ is
independent from the chosen embedding $\Lambda \to \edop^n$. Likewise we
see that the coefficients of $q^k$ in  $\Theta_{m,m,\Lambda}$ vanish for
all $k < 2l$. Now we consider for two vectors $v,w \in \Lambda$ the
number
\[ \delta_m(v,w) = (2m)!2^{2m}\prod_{l=0}^{m-1} (n+4m-4-2l)
p_m(\cos(\measuredangle (v,w ))) \|v\|^{2m}\|w\|^{2m} \, . \]
This number is by Lemma \ref{pm} and the definition of the cosine
given by
\[ \delta_m(v,w) = \sum_{k=0}^m
(-1)^k\frac{(2m)!}{(2m-2k)!k!} 2^{2m-k}\sca{v}{w}^{2m-2k}
\|v\|^{2k}\|w\|^{2k} \prod_{l=k}^{m-1}(n+4m-4-2l) \, .
\]
Since $\Lambda$ is integral $\sca{v}{w} \in \frac{1}{2}\zdop$. Thus,
$\delta_m(v,w)$ is a sum of integers. This completes the proof.
\end{proof}

\subsection{Example: The modular forms $\Theta_{m,m,E_8}$}\label{ex-e8}
Let $v \in E_8$ be any lattice vector of length one.
Basic combinatorics yield, that the possible values of $\sca{v}{w}^2$
for the 240 lattice vectors $w \in E_8$ of length one are:
one (2 times), $\frac{1}{4}$ (112 times), and $0$ (126 times).
This allows by Theorem \ref{main1} the computation of the coefficient
$a_{m,m,2}$ of $q^2$ in the $q$-expansion of $\Theta_{m,m,E_8}$ for all
integers $m \geq 1$. We list the first:
\[ \begin{array}{c|c}
m & a_{m,m,2} \\
\hline
m \in \{1,2,3,5 \} & 0 \\
4 &  3/896\\
6 & 7/316293120\\
7 & 1/30057431040\\
8 & 1/22235892940800 \\
9 & 1/21727643959296000 \\
\end{array}\]

Now $\Theta_{m,m,E_8}$ is a cusp form of weight $4m+8$ for
$\SL_2(\zdop)$ which starts with $\Theta_{m,m,E_8} = a_{m,m,2}q^2 +
\dots $. Since we know the dimensions of the spaces of cusp forms we can
determine $\Theta_{m,m,E_8}$ from $a_{m,m,2}$ for $m \leq 6$, and
obtain:
\[ \begin{array}{rcl}
\Theta_{m,m,E_8}(\tau) & = & 0 \quad \mbox{for  } m \in \{1,2,3,5 \} \\
\\
\Theta_{4,4,E_8}(\tau) & = & \displaystyle\frac{3}{896} \Delta^2(\tau)\\
\\
\Theta_{6,6,E_8}(\tau) & = &
\displaystyle\frac{7}{658944} G_8(\tau)\Delta^2(\tau) \,
,\\
\end{array} \]
where $\Delta(\tau)=q\prod_{n \geq 1}(1-q^n)^{24}$ is the discriminant
function, and $G_8(\tau) = \frac{1}{480}+\sum_{n \geq 1} \sigma_7(n)q^n$
is the Eisenstein series of weight eight (see Chapter 0 in \cite{Zag}).
The vanishing of $\Theta_{m,m,E_8}(\tau)$ for $m \in \{1,2,3,5 \}$ can
be deduced alternatively: These modular forms are sums of squares of
cusp forms of weight $2m+4$ which do not exists for those values of $m$.
The same argument can be applied for $m \in \{7,8,9\}$. Here
$\Theta_{m,m,E_8}$ is a sum of squares of cusp forms of weight $2m+4$.
Those cusp forms form a one-dimensional vector space with generator, say
$G_{2m-8}\Delta$, where $G_{2m-8}$ is the Eisenstein series of weight
$2m-8$. We deduce that $\Theta_{m,m,E_8}$ is a scalar multiple of
$G_{2m-8}^2\Delta^2$. We
can use the coefficient $a_{m,m,2}$ to deduce this scalar. After all,
this yields:
\[ \begin{array}{rclcl}
\Theta_{7,7,E_8}(\tau) &
= & \displaystyle \frac{9}{1064960} G_6(\tau)^2\Delta(\tau)^2
&& \mbox{with } G_6(\tau) = \frac{-1}{504}+\sum_{n \geq 1}
\sigma_5(n)q^n\, ,\\\\
\Theta_{8,8,E_8}(\tau) & = & \displaystyle\frac{1}{96509952}
G_8(\tau)^2\Delta(\tau)^2\, , \\\\
\Theta_{9,9,E_8}(\tau) &
= & \displaystyle\frac{11}{3429236736000} G_{10}(\tau)^2\Delta(\tau)^2 
&& \mbox{with } G_{10}(\tau) = \frac{-1}{264}+\sum_{n \geq 1}
\sigma_9(n)q^n \, .\\

\end{array}\]

\section{The modular form $\Theta_{1,1,1,\Lambda}$}
\subsection{The invariant harmonic datum
$p_{1,1,1,\Lambda}$}\label{aux-41}
We consider the harmonic Taylor coefficient $f^\harm_{\Lambda,1}$ of the
function $f_\Lambda$. By Proposition \ref{harm-tay} it is given by
\[f^\harm_{\Lambda,1} =
\sum_{i =1}^n \sca{x_i^2-\frac{1}{n}\sum_{j=1}^nx_j^2}{f_\Lambda}
\frac{x_i^2}{2} + \sum_{1 \leq i < j \leq n}
\sca{x_ix_j}{f_\Lambda}x_ix_j\, . \]
When introducing the shorthand
$h_i=\sca{nx_i^2-\sum_{j=1}^nx_j^2}{f_\Lambda}$, and 
$b_{ij}=\sca{x_ix_j}{f_\Lambda}$ we obtain
\[2n f^\harm_{\Lambda,1} = \sum_{i =1}^n h_i x_i^2 + 2n\sum_{1 \leq i < j
\leq n} b_{ij}x_ix_j \, . \]
We consider the invariant
harmonic datum:
\[ p_{1,1,1,\Lambda} =
\int_{S^{n-1}} \left(f^\harm_{\Lambda,1}\right)^3 d\bar \mu \,.\]
We need the following spherical integrals
$ \int_{S^{n-1}} x_i^6d\bar \mu = \frac{15}{n(n+2)(n+4)}$, 
 $\int_{S^{n-1}} x_i^4x_j^2 d\bar \mu = \frac{3}{n(n+2)(n+4)}$, and
$\int_{S^{n-1}} x_i^2x_j^2x_k^2 d\bar \mu = \frac{1}{n(n+2)(n+4)}$.
Furthermore $\int_{S^{n-1}} \prod_i x_i^{n_i} d \bar \mu =0$ when at
least one of the exponents $n_i$ is an odd integer (see \cite[Corollary
A.2]{CH}). After these preparation we compute:
\[ \begin{array}{rcl}
n(n+2)(n+4)(2n)^3 p_{1,1,1,\Lambda} & = &n(n+2)(n+4) \int_{S^{n-1}} 2n
(f^\harm_{\Lambda,1})^3 d \bar \mu \qquad \qquad \qquad \\\end{array}\]
\[ \begin{array}{rcl}
&=& n(n+2)(n+4) \int_{S^{n-1}} \left( \sum_{i =1}^n h_i x_i^2 +
2n\sum_{1 \leq i < j \leq n} b_{ij}x_ix_j \right)^3 d \bar \mu \\
&=&\displaystyle \sum_{i_1=1}^n\sum_{i_2=1}^n
\sum_{i_3=1}^n h_{i_1}h_{i_2}h_{i_3}
+ 6\sum_{i_1=1}^n\sum_{i_2=1}^n  h_{i_1}^2h_{i_2} +8\sum_{i=1}^n h_i^3 +
12n^2\sum_{i=1}^n \sum_{1 \leq j < k \leq n} h_ib_{jk}^2 +\\
&&\displaystyle + 24n^2\sum_{1 \leq j < k \leq n} (h_k+h_j)b_{jk}^2 +
48n^3 \sum_{1 \leq i<j < k \leq n} b_{ij}b_{ik}b_{jk} \\
\end{array}\]
Having in mind that $\sum_{i=1}^n h_i =0$ we obtain
\begin{lemma}\label{p111}
With the notation from \ref{aux-41} we have
\[ \begin{array}{rcl}
n^4(n+2)(n+4) p_{1,1,1,\Lambda} &=& \displaystyle
n^4(n+2)(n+4) \int_{S^{n-1}}
\left(f^\harm_{\Lambda,1}\right)^3 d\bar \mu\\
&=&  \displaystyle \sum_{i=1}^n h_i^3 + 3n^2\sum_{1
\leq i < j \leq n} (h_i+h_j)b_{ij}^2 + 6n^3 \sum_{1 \leq i<j < k \leq n}
b_{ij}b_{ik}b_{jk} \, .
\end{array}
\]
\end{lemma}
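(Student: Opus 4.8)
The plan is to start from the explicit form $2n f^\harm_{\Lambda,1} = \sum_{i=1}^n h_i x_i^2 + 2n \sum_{1\le i<j\le n} b_{ij} x_i x_j$ recorded in \ref{aux-41}, cube it, multiply by $n(n+2)(n+4)$, and integrate over $S^{n-1}$. The left-hand side then becomes $n(n+2)(n+4)(2n)^3 p_{1,1,1,\Lambda} = 8n^4(n+2)(n+4)p_{1,1,1,\Lambda}$, so the whole statement reduces to computing $\int_{S^{n-1}}(2nf^\harm_{\Lambda,1})^3 \, d\bar\mu$ and then dividing by $8$. Writing $P = \sum_i h_i x_i^2$ and $Q = 2n\sum_{i<j}b_{ij}x_ix_j$, I would expand $(P+Q)^3 = P^3 + 3P^2Q + 3PQ^2 + Q^3$ and integrate term by term, using that every spherical moment $\int_{S^{n-1}}\prod_k x_k^{n_k}\, d\bar\mu$ with some odd $n_k$ vanishes, together with the three nonzero sextic moments $\int x_i^6$, $\int x_i^4x_j^2$, $\int x_i^2x_j^2x_k^2$ listed in \ref{aux-41}.

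The heart of the calculation is the combinatorial bookkeeping of which monomials survive. The term $3P^2Q$ always carries a single odd exponent (from the factor $x_ix_j$ of $Q$ with $i\ne j$), hence integrates to $0$. In $P^3 = \sum_{i,j,k}h_ih_jh_k \, x_i^2x_j^2x_k^2$ every term survives, and I would split the index sum by how many of $i,j,k$ coincide, insert the moment values $15,3,1$ (times the common factor $1/(n(n+2)(n+4))$), and reassemble into the unrestricted sums $\sum_{i,j,k}h_ih_jh_k + 6\sum_{i,j}h_i^2h_j + 8\sum_i h_i^3$. In $3PQ^2$ only the diagonal of $Q^2$ survives (the two $b$-edges must coincide), yielding $12n^2\sum_i\sum_{j<k}h_ib_{jk}^2 + 24n^2\sum_{j<k}(h_j+h_k)b_{jk}^2$, where the split records whether $a\in\{j,k\}$. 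In $Q^3$ only the ``triangle'' triples of edges $\{i,j\},\{i,k\},\{j,k\}$ survive, and counting the $3!=6$ orderings of each triangle gives $48n^3\sum_{i<j<k}b_{ij}b_{ik}b_{jk}$. Collecting the six terms reproduces exactly the expansion displayed just before the lemma.

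The finishing step is the trace relation $\sum_{i=1}^n h_i = 0$, which I would read off from linearity of the pairing: $\sum_i h_i = \sca{\sum_{i=1}^n(nx_i^2 - r^2)}{f_\Lambda} = \sca{nr^2 - nr^2}{f_\Lambda} = \sca{0}{f_\Lambda} = 0$, reflecting that the harmonic diagonal quadratics $x_i^2 - \tfrac1n r^2$ sum to zero. Exactly three of the six terms contain a loose factor $\sum_j h_j$, namely $\sum_{i,j,k}h_ih_jh_k = (\sum_i h_i)^3$, $6\sum_{i,j}h_i^2h_j = 6(\sum_i h_i^2)(\sum_j h_j)$, and $12n^2(\sum_i h_i)(\sum_{j<k}b_{jk}^2)$; all three therefore vanish. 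What remains is $8\sum_i h_i^3 + 24n^2\sum_{i<j}(h_i+h_j)b_{ij}^2 + 48n^3\sum_{i<j<k}b_{ij}b_{ik}b_{jk}$, and dividing by $8$ yields the asserted formula for $n^4(n+2)(n+4)p_{1,1,1,\Lambda}$.

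I do not expect a conceptual obstacle; the only real risk is arithmetic. The delicate points are purely the combinatorial multiplicities — the factors $6$ and $8$ arising from how coincidences of indices interact with the moment weights $1,3,15$, the doubling $12n^2\to 24n^2$ from the case $a\in\{j,k\}$ in $3PQ^2$, and the six orderings of each triangle in $Q^3$ — so I would guard against a slip by re-checking each coefficient on a small case (for instance $n=3$) before trusting the general identity.
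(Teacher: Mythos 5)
Your proposal is correct and follows exactly the computation the paper itself carries out in \ref{aux-41}: cube the expression $2nf^\harm_{\Lambda,1}=\sum_i h_ix_i^2+2n\sum_{i<j}b_{ij}x_ix_j$, integrate using the sextic spherical moments, and then kill three of the six resulting terms via $\sum_i h_i=0$ before dividing by $8$. All your combinatorial coefficients ($1,6,8$ from $P^3$, the $12n^2/24n^2$ split in $3PQ^2$, and $48n^3$ from the triangles in $Q^3$) agree with the paper's displayed expansion.
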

\subsection{Definition of $\Theta_{1,1,1,\Lambda}$}\label{def-t111}
Again we rescale our definition and find by Theorem \ref{thetam1mk}
for any integral lattice  $\Lambda$ the modular form
\[ \Theta_{1,1,1,\Lambda} = \sum_{i=1}^n
\Theta_{h_i, \Lambda}^3 + 3n^2 \sum_{1
\leq i < j \leq n} (\Theta_{h_i, \Lambda} + 
\Theta_{h_j, \Lambda})\Theta_{x_ix_j,\Lambda}^2 +  6n^3 \sum_{1 \leq i<j
< k \leq n} \Theta_{x_ix_j,\Lambda} \Theta_{x_ix_k,\Lambda}
\Theta_{x_jx_k,\Lambda}
\]
where $h_i$ denotes the harmonic polynomial $h_i=nx^2_i-\sum_{j=1}^mx_j^2$.
To determine the $q$-expansion of $\Theta_{1,1,1,\Lambda}$ we need
\begin{lemma}\label{aux-43}
Let $u,v,w \in \edop^n$ be three vectors in euclidean space. We fix with
$\Bcal_2 = \left\{ \frac{x^I}{\sqrt{I!}} \right\}_{I \subset \ndop^n \,
|I|=2}$ a basis of the homogeneous polynomials of degree 2
on $\edop^n$. Denote by $P=P_{\harm,2}$ be the harmonic projection of
degree two. Then we have an equality
\[ \Xi(u,v,w):=n^3(n+2)(n+4) \sum_{g_1,g_2,g_3 \in \Bcal_2}
P(g_1(u))P(g_2(v))P(g_3(w)) \int_{S^{n-1}} g_1g_2g_3 d\bar \mu = \qquad\]
\[ =
2\|u\|^2\|v\|^2\|w\|^2
-n(\|u\|^2\sca{v}{w}^2+\|v\|^2\sca{u}{w}^2+\|w\|^2\sca{u}{v}^2)
+n^2\sca{v}{w}\sca{u}{w}\sca{u}{v} \, . \]
Furthermore, when $\|u\|^2$, $\|v\|^2$, and $\|w\|^2$ are integers, and
the scalar products $\sca{v}{w}$, $\sca{u}{w}$, and $\sca{u}{v}$ are in
$\frac{1}{2}\zdop$, then $8 \Xi(u,v,w)$ is an integer.
\end{lemma}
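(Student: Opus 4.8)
The plan is to collapse the triple sum to a single integral over the sphere by identifying the inner sums as values of the reproducing kernel of $A_2$. Write $P=P_{\harm,2}=\id+\frac{1}{2n}r^2\Delta$, and recall from the proof of Lemma~\ref{aux-32} that $\sum_{g\in\Bcal_2}g(u)g(x)=\tfrac12\sca{u}{x}^2$ is the reproducing kernel of $A_2$ for the pairing $\sca{\cdot}{\cdot}$, that is, $f(u)=\sca{f}{\,\tfrac12\sca{u}{\cdot}^2}$ for every $f\in A_2$. Since $P$ is self-adjoint for this pairing (it is an orthogonal projection, built from $r^2$ and $\Delta$, which are adjoint up to sign by \ref{harm-num}.(3)), I obtain $(Pg)(u)=\sca{Pg}{\tfrac12\sca{u}{\cdot}^2}=\sca{g}{P\bigl(\tfrac12\sca{u}{\cdot}^2\bigr)}$, so expanding $P\bigl(\tfrac12\sca{u}{\cdot}^2\bigr)$ in the orthonormal basis $\Bcal_2$ gives the key identity
\[ \Phi_u(x):=\sum_{g\in\Bcal_2}(Pg)(u)\,g(x)=P\bigl(\tfrac12\sca{u}{\cdot}^2\bigr)(x)=\tfrac12\sca{u}{x}^2-\tfrac{\|u\|^2}{2n}\,r^2 , \]
where I used $\Delta\sca{u}{x}^2=-2\|u\|^2$.

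Since the three index sums in the definition of $\Xi$ couple only through the factor $\int_{S^{n-1}}g_1g_2g_3\,d\bar\mu$, substituting the identity above and using linearity of the integral collapses them into
\[ \Xi(u,v,w)=n^3(n+2)(n+4)\int_{S^{n-1}}\Phi_u\,\Phi_v\,\Phi_w\,d\bar\mu , \]
and on $S^{n-1}$ one has $r^2=1$, so $\Phi_u=\tfrac12\sca{u}{x}^2-\tfrac{\|u\|^2}{2n}$. I would then expand the product $\Phi_u\Phi_v\Phi_w$ and integrate term by term. The only spherical integrals needed are $\int_{S^{n-1}}\sca{a}{x}^2\,d\bar\mu=\tfrac{\|a\|^2}{n}$ together with
\[ \int_{S^{n-1}}\sca{a}{x}^2\sca{b}{x}^2\,d\bar\mu=\frac{\|a\|^2\|b\|^2+2\sca{a}{b}^2}{n(n+2)} , \]
\[ \int_{S^{n-1}}\sca{a}{x}^2\sca{b}{x}^2\sca{c}{x}^2\,d\bar\mu=\frac{\|a\|^2\|b\|^2\|c\|^2+2\bigl(\|a\|^2\sca{b}{c}^2+\|b\|^2\sca{a}{c}^2+\|c\|^2\sca{a}{b}^2\bigr)+8\sca{a}{b}\sca{a}{c}\sca{b}{c}}{n(n+2)(n+4)} , \]
which follow from the elementary integrals listed in \ref{aux-41} by the standard pairing expansion of the sixth moment; the coefficient $8$ is precisely the number of perfect matchings of the six indices that join no two indices belonging to the same vector. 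Writing $A=\|u\|^2\|v\|^2\|w\|^2$, $B=\|u\|^2\sca{v}{w}^2+\|v\|^2\sca{u}{w}^2+\|w\|^2\sca{u}{v}^2$, and $C=\sca{u}{v}\sca{u}{w}\sca{v}{w}$, the four groups of terms produced by the expansion (according to the number of $\tfrac12\sca{\cdot}{x}^2$ factors chosen) contribute coefficients of $A$, $B$, $C$ over the common factor $\tfrac18$; after multiplication by $n^3(n+2)(n+4)$ the extraneous powers of $n$ cancel and the result reduces to $2A-nB+n^2C$, the asserted closed form.

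The integrality statement then follows directly from this closed form: under the hypotheses $\|u\|^2,\|v\|^2,\|w\|^2\in\zdop$ and $\sca{v}{w},\sca{u}{w},\sca{u}{v}\in\tfrac12\zdop$ one has $A\in\zdop$, while $nB\in\tfrac14\zdop$ (a half-integer squared) and $n^2C\in\tfrac18\zdop$ (three half-integers multiplied); the largest denominator is the $8$ in the last term, so $8\,\Xi(u,v,w)\in\zdop$. I expect the main obstacle to be the middle step: pinning down the sixth-moment coefficient $8$ on $\sca{a}{b}\sca{a}{c}\sca{b}{c}$ and then checking that the four contributions to the coefficient of $A$ combine to exactly $2$ once the powers of $n$ cancel. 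That cancellation is the one spot where a sign or counting error would survive unnoticed, so I would cross-check it against the independent special value $\Xi(e_1,e_2,e_3)$, for which the off-diagonal inner products vanish and only the diagonal $\sca{e_i}{x}^2$ terms contribute, isolating the coefficient of $A$.
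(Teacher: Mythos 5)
Your proof is correct, and it is organized differently from the paper's. The paper disposes of this lemma in one line, referring to the coordinate computation of $p_{1,1,1,\Lambda}$ in \ref{aux-41}: there one writes everything in the monomial basis, cubes the expression $\sum_i h_i x_i^2 + 2n\sum_{i<j}b_{ij}x_ix_j$, and integrates monomials over the sphere term by term. You instead first perform the sum over each $g_i$, recognizing $\sum_{g\in\Bcal_2}(Pg)(u)\,g(x)$ as the harmonic projection of the reproducing kernel $\tfrac12\sca{u}{x}^2$ of $A_2$; your identity $\Phi_u=\tfrac12\sca{u}{x}^2-\tfrac{\|u\|^2}{2n}r^2$ is right (with the paper's sign convention $\Delta=-\sum\dee^2/\dee x_k^2$ one gets $\Delta\sca{u}{x}^2=-2\|u\|^2$, so $P_{\harm,2}=\id+\tfrac{1}{2n}r^2\Delta$ gives exactly this), and the collapse to $n^3(n+2)(n+4)\int_{S^{n-1}}\Phi_u\Phi_v\Phi_w\,d\bar\mu$ is legitimate. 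The vector-form moment identities you quote are correct (I checked them against the monomial integrals listed in \ref{aux-41}, and the coefficient $8$ is indeed the number of cross-vector perfect matchings), and the final arithmetic works: the coefficient of $A=\|u\|^2\|v\|^2\|w\|^2$ is $\tfrac18\bigl(n^2-3n(n+4)+2(n+2)(n+4)\bigr)=2$, that of $B$ is $\tfrac18\bigl(2n^2-2n(n+4)\bigr)=-n$, and that of $C$ is $n^2$, confirming $\Xi=2A-nB+n^2C$; the integrality of $8\Xi$ then follows as you say. What your route buys is a coordinate-free calculation whose intermediate objects ($\Phi_u$ and the sixth spherical moments of linear forms) are reusable and whose combinatorics are transparent; what the paper's route buys is uniformity with the computation already carried out in \ref{aux-41}, at the cost of a larger case analysis over monomials. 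The one place where your write-up is thinner than it should be is the final coefficient bookkeeping, which you flag yourself; since it checks out, the proof stands.
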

\begin{proof}
This is a straightforward calculation along the lines of computing
$p_{1,1,1,\Lambda}$ in \ref{aux-41}.
\end{proof}

\begin{theorem}\label{theta-111}
For any integral lattice $\Lambda$ of level $N$ and discriminant $D$,
the modular form $\Theta_{1,1,1,\Lambda}$ defined in \ref{def-t111} has
level $N$, character $\left( \frac{D}{\cdot} \right)$, and
weight $\frac{n+12}{2}$. $\Theta_{1,1,1,\Lambda}$ is independent from
the embedding $\Lambda \to \edop^n$. Its $q$-expansion is given by
\[ \Theta_{1,1,1,\Lambda}(\tau) = n\sum_{k \geq 0} \left( \sum_{(u,v,w) \in
\Lambda^3 \, \|u\|^2+\|v\|^2+\|w\|^2=k} \Xi(u,v,w) \right) q^k \, . \]
Furthermore, $\frac{8}{n} \Theta_{1,1,1,\Lambda}(\tau) \in \zdop[[q]]$.
\end{theorem}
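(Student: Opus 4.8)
The structural assertions---that $\Theta_{1,1,1,\Lambda}$ is a modular form of the stated weight and level $N$, that it carries the character $\left(\frac{D}{\cdot}\right)$, and that it is independent of the chosen embedding $\Lambda\to\edop^n$---I would not prove by a fresh computation. The plan is to read them off from Theorem \ref{thetam1mk} specialised to $k=3$ and $m_1=m_2=m_3=1$: the form defined in \ref{def-t111} is, up to the fixed rational scalar $n^4(n+2)(n+4)$ recorded in Lemma \ref{p111}, the modular form attached to the invariant harmonic datum $p_{1,1,1,\Lambda}$, and a nonzero rational multiple inherits weight, level and character. In particular the character is $\left(\frac{D}{\cdot}\right)$ precisely because $k=3$ is odd, as in the last sentence of Theorem \ref{thetam1mk}.

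For the $q$-expansion I would start from the presentation of the (unscaled) form furnished by Proposition \ref{alt-def} with the orthonormal basis $\Bcal_2=\{x_i^2/\sqrt2\}_{i}\cup\{x_ix_j\}_{i<j}$ of $A_2$. Inserting the expansions $\Theta_{P_\harm(g),\Lambda}(\tau)=\sum_{u\in\Lambda}P_\harm(g)(u)\,q^{\|u\|^2}$ of the three spherical theta factors and collecting monomials $q^k$ according to $\|u\|^2+\|v\|^2+\|w\|^2=k$, the coefficient of $q^k$ becomes a sum over triples $(u,v,w)\in\Lambda^3$ of the quantity
\[\sum_{g_1,g_2,g_3\in\Bcal_2}P_\harm(g_1)(u)\,P_\harm(g_2)(v)\,P_\harm(g_3)(w)\int_{S^{n-1}}g_1g_2g_3\,d\bar\mu.\]
By the definition of $\Xi$ in Lemma \ref{aux-43} this inner sum equals $\Xi(u,v,w)/\bigl(n^3(n+2)(n+4)\bigr)$. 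Multiplying by the scalar $n^4(n+2)(n+4)$ from the previous paragraph leaves exactly the factor $n$, so the coefficient of $q^k$ is $n\sum_{(u,v,w)}\Xi(u,v,w)$, which is the claimed expansion. Independence of the embedding can also be re-read off directly, since $\Xi(u,v,w)$ depends only on the norms and pairwise inner products of $u,v,w$.

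The integrality statement is then an immediate corollary. The coefficient of $q^k$ in $\frac{8}{n}\Theta_{1,1,1,\Lambda}$ is $\sum_{(u,v,w)}8\,\Xi(u,v,w)$, and since $\Lambda$ is integral we have $\|u\|^2,\|v\|^2,\|w\|^2\in\zdop$ and $\sca{u}{v},\sca{u}{w},\sca{v}{w}\in\frac12\zdop$; the last assertion of Lemma \ref{aux-43} shows each summand $8\,\Xi(u,v,w)$ is an integer, whence $\frac{8}{n}\Theta_{1,1,1,\Lambda}\in\zdop[[q]]$.

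The conceptual content is light, so the one place to be careful is the bookkeeping that identifies the rescaled definition of \ref{def-t111} with the Proposition \ref{alt-def} presentation: one must confirm that expanding the cube in the $h_i,b_{ij}$ variables reproduces, constant by constant, the sum over $\Bcal_2$ weighted by the spherical integrals $\int_{S^{n-1}}g_1g_2g_3\,d\bar\mu$ computed in \ref{aux-41}. This is the step where the factor $n^4(n+2)(n+4)$ must be tracked without slip; everything else follows formally from the cited results.
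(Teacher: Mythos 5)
Your proposal is correct and follows the paper's own proof essentially step for step: weight, level, character and invariance from Theorem \ref{thetam1mk} via the Lemma \ref{p111} identification, the $q$-expansion from Proposition \ref{alt-def} combined with the definition of $\Xi$ in Lemma \ref{aux-43} (with the same bookkeeping $n^4(n+2)(n+4)/\bigl(n^3(n+2)(n+4)\bigr)=n$), and integrality from the last assertion of that lemma. No gaps.
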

\begin{proof}
If we define $\Theta_{1,1,1,\Lambda}$ by
\[ \Theta_{1,1,1,\Lambda} = n^4(n+2)(n+4) \sum_{h1,h2,h_3 \in \Bcal^\harm_2 }
\Theta_{h_1,\Lambda} \Theta_{h_2,\Lambda}\Theta_{h_3,\Lambda} 
\int_{S^{n-1}}h_1h_2h_3 d \bar \mu \, ,
\]
then we obtain by Theorem \ref{thetam1mk} an invariant modular form of
the given weight, character, and level. Lemma \ref{p111} show that this
definition coincides with the definition in \ref{def-t111}.
By Proposition \ref{alt-def} we can use the basis $\Bcal_2 = \left\{
\frac{x^I}{\sqrt{I!}} \right\}_{I \subset \ndop^n \,
|I|=2}$ of the homogeneous polynomials of degree 2 and the harmonic
projection $P$ to compute $\Theta_{1,1,1,\Lambda}$ as
\[ \Theta_{1,1,1,\Lambda} = n^4(n+2)(n+4) \sum_{h1,h2,h_3 \in
\Bcal_2 }
\Theta_{P(g_1),\Lambda} \Theta_{P(g_2),\Lambda}\Theta_{P(g_3),\Lambda} 
\int_{S^{n-1}}g_1g_2g_3 d \bar \mu \, .
\]
Now we deduce the $q$-expansion and $\frac{8}{n} \Theta_{1,1,1,\Lambda}
\in \zdop[[q]]$ from Lemma \ref{aux-43}.
\end{proof}

\section{Some combinatorics}
\begin{lemma}\label{combi-0}
We define for all integers $d \geq 1$, and $w$ the quantity
\[ q_{d,w}=\sum_{k=0}^d (-1)^k \binom{d}{k} \binom{w+k}{d-1} \,.\]
For all integers $d \geq 1$, we have
$q_{d,-1}=1$, and $q_{d,w}=0$ for $w \ne -1$.
\end{lemma}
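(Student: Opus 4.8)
The plan is to read the sum as an iterated finite difference and to collapse it by a Pascal-type recurrence in $d$, leaving only a one-line base case. Throughout I use the combinatorial convention $\binom{m}{r}=0$ unless $0\le r\le m$; this is the crucial point, because under the polynomial convention the sum would vanish identically. Indeed $\sum_{k=0}^d(-1)^k\binom{d}{k}f(k)=(-1)^d\Delta^d f(0)$ for the forward difference $\Delta f(k)=f(k+1)-f(k)$, and $\Delta^d$ annihilates every polynomial of degree $\le d-1$ in $k$; since $k\mapsto\binom{w+k}{d-1}$ agrees with such a polynomial only on the nonnegative range, the entire value of $q_{d,w}$ is produced by the deviation of the combinatorial binomial at negative upper index. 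This already indicates where the support $\{-1\}$ comes from and where the main difficulty will lie.

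First I would prove the recurrence $q_{d,w}=-q_{d-1,w}$, valid for every $d\ge 2$ and every integer $w$. Writing $\binom{d}{k}=\binom{d-1}{k}+\binom{d-1}{k-1}$ and reindexing the second piece by $k\mapsto k+1$ gives
\[ q_{d,w}=\sum_{k=0}^{d-1}(-1)^k\binom{d-1}{k}\Bigl(\binom{w+k}{d-1}-\binom{w+k+1}{d-1}\Bigr). \]
Pascal's rule $\binom{w+k+1}{d-1}-\binom{w+k}{d-1}=\binom{w+k}{d-2}$ turns the bracket into $-\binom{w+k}{d-2}$, so that
\[ q_{d,w}=-\sum_{k=0}^{d-1}(-1)^k\binom{d-1}{k}\binom{w+k}{d-2}=-q_{d-1,w}. \]
Iterating down to $d=1$ reduces the whole statement to the base case $q_{1,w}=\binom{w}{0}-\binom{w+1}{0}$, which under the convention equals $[\,w\ge 0\,]-[\,w\ge -1\,]$ and is therefore supported precisely at $w=-1$.

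The main obstacle, and the only genuinely delicate point, is justifying Pascal's rule at the boundary. For the combinatorial convention $\binom{m+1}{r}=\binom{m}{r}+\binom{m}{r-1}$ holds at every integer $m$ except at $m=-1$ with $r=0$, where the left side is $1$ but the right side is $0$. In the recurrence step the lower index is $r=d-1$, so for $d\ge 2$ we have $r\ge 1$ and the rule is valid at all arguments (at $w+k=-1$ both sides vanish); the single exceptional configuration is thus quarantined in the base case $d=1$, where $r=0$ and it produces the lone nonzero contribution. Consequently the recurrence transports the base-case support $\{-1\}$ upward unchanged, giving $q_{d,w}=0$ for $w\ne -1$, while $q_{d,-1}$ is fixed by the base case together with the sign accumulated over the $d-1$ applications of $q_{d,w}=-q_{d-1,w}$; this sign bookkeeping is exactly what must be tracked with care to land on the stated normalization $q_{d,-1}=1$. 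As an independent check on the vanishing I would verify the generating-function identity $\sum_{k=0}^d(-1)^k\binom{d}{k}(1+z)^{w+k}=(-z)^d(1+z)^w$, whose coefficient of $z^{d-1}$ is $(-1)^d[z^{-1}](1+z)^w$ and hence manifestly $0$ for every $w\ge 0$, confirming that all nonzero behaviour is concentrated in the negative range and isolating the spike at $w=-1$.
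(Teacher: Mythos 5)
Your telescoping argument is a genuinely different route from the paper's: the paper multiplies two closed-form Laurent series, $f_d=\left(\frac{t-1}{t}\right)^d$ and $g_{d-1,w}=t^{d-1-w}(1-t)^{-d}$, and reads off the coefficient of $t^0$ of the product, whereas you reduce $d$ one step at a time. Your recurrence $q_{d,w}=-q_{d-1,w}$ for $d\ge 2$ is correct, your base case $q_{1,w}=[\,w\ge 0\,]-[\,w\ge -1\,]$ is correct under the stated convention, and together they give the substantive conclusion $q_{d,w}=0$ for $w\ne -1$. Your boundary analysis of Pascal's rule (failing only at upper index $-1$ with lower index $0$, hence quarantined in the base case since the recurrence only uses lower index $d-1\ge 1$) is exactly the right point to worry about, and you handle it correctly; the paper needs the same care for its recursion $g_{a,b}=t(g_{a,b}+g_{a-1,b})$ but does not comment on it.

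The one place you stop short is the ``sign bookkeeping'' you defer, and you should not have deferred it: it is a one-line computation, $q_{d,-1}=(-1)^{d-1}q_{1,-1}=(-1)^{d-1}\cdot(-1)=(-1)^d$, and it does \emph{not} land on the stated normalization $q_{d,-1}=1$ when $d$ is odd (direct check: $q_{3,-1}=-\binom{3}{3}\binom{2}{2}=-1$, the other terms vanishing). This is not a defect of your method but a sign slip in the lemma as printed: redoing the last line of the paper's own proof gives $f_d\,g_{d-1,w}=(-1)^d t^{-1-w}$, not $t^{1+w}$, so the generating-function route also yields $q_{d,-1}=(-1)^d$. The discrepancy is harmless downstream, since Lemma \ref{combi-1} only invokes the vanishing $q_{d,w}=0$ for $w\ge 0$, where the combinatorial and polynomial conventions for $\binom{w+k}{d-1}$ agree; but your write-up should carry out the bookkeeping and state the value $(-1)^d$ rather than gesture at a computation which, once performed, contradicts the claim being proved.
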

\begin{proof}
We consider the formal Laurent series $f_d,g_{d-1,w} \in \qdop((t))$,
given by
\[f_d:=\sum_{k=0}^d (-t\inv)^k\binom{d}{k} = \left( \frac{t-1}{t}
\right)^d \quad \mbox{, and } \quad
g_{a,b} := \sum_{k \in \zdop} \binom{b+k}{a} t^k \, . \]
We have $g_{0,b} = t^{-b}(1-t)\inv$ by definition.
From the formula $\binom{b+k}{a} = \binom{b-1+k}{a} + \binom{b-1+k}{a-1}$
we deduce that
$g_{a,b} = t(g_{a,b}+g_{a-1,b})$. Hence $g_{a,b}=\frac{t}{1-t}g_{a-1,b}$
which gives by induction:
\[ g_{a,b} = t^{a-b}(1-t)^{-1-a} \, .\]
The number $q_{d,w} $ is the coefficient of $t^0$ in $f_d g_{d-1,w}$. 
Eventually, we conclude from the above calculation that
$f_d g_{d-1,w} = t^{1+w}$.
\end{proof}

\begin{lemma}\label{pre-pm}
For all integers $w$, and $r \geq 0$ we have an equality
\begin{equation}\label{form-2}
\sum_{p=0}^r (-1)^{r-p}
(w+2p-2r)
\binom{w}{r-p} \binom{w+p-2r-1}{p}
= \left\{ \begin{array}{ll} w & \mbox{ for } r=0\\
0 & \mbox{ for } r \geq 1\\ \end{array} \right.
\end{equation}
\end{lemma}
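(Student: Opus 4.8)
The plan is to collapse the alternating sum onto two instances of the Chu--Vandermonde convolution. First I would rewrite the second binomial coefficient by upper-index negation: with $m=w-2r$ the identity $\binom{-m}{p}=(-1)^p\binom{m+p-1}{p}$ gives $\binom{w+p-2r-1}{p}=(-1)^p\binom{2r-w}{p}$. Pulling the factor $(-1)^r$ out and abbreviating $b:=2r-w$, and noting that the weight factorises as $w+2p-2r=2p-b$, the left-hand side of \eqref{form-2} becomes
\[
(-1)^r \sum_{p=0}^{r} (2p-b)\,\binom{2r-w}{p}\binom{w}{r-p}\, .
\]
I read all binomial coefficients as polynomials in their upper entry, so every step is valid for arbitrary integers $w$; the displayed range $0\le p\le r$ is exactly the support, since $\binom{w}{r-p}$ vanishes once $p>r$ and $\binom{2r-w}{p}$ vanishes for $p<0$.

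Next I would split this sum into its constant and its linear part in $p$. The constant part is Chu--Vandermonde directly, the two upper indices summing to $(2r-w)+w=2r$:
\[
\sum_{p=0}^{r}\binom{2r-w}{p}\binom{w}{r-p} = \binom{2r}{r}\, .
\]
For the linear part I would absorb the factor $p$ using $p\binom{2r-w}{p}=(2r-w)\binom{2r-w-1}{p-1}$ and reindex by $q=p-1$, producing a second Vandermonde convolution whose upper indices now sum to $(2r-w-1)+w=2r-1$:
\[
\sum_{p=0}^{r} p\,\binom{2r-w}{p}\binom{w}{r-p} = b\sum_{q=0}^{r-1}\binom{2r-w-1}{q}\binom{w}{r-1-q} = b\binom{2r-1}{r-1}\, .
\]

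Combining the two pieces yields $\sum_{p=0}^{r}(2p-b)\binom{2r-w}{p}\binom{w}{r-p} = 2b\binom{2r-1}{r-1}-b\binom{2r}{r}$. For $r\ge 1$ the elementary identity $\binom{2r}{r}=2\binom{2r-1}{r-1}$ (immediate from Pascal's rule together with $\binom{2r-1}{r-1}=\binom{2r-1}{r}$) makes this bracket collapse to zero, so the whole expression vanishes, as claimed. For $r=0$ only the single term $p=0$ survives in the reduced sum, giving $-b=w$ since $b=-w$; this matches the value $w$ demanded on the right-hand side of \eqref{form-2}.

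I do not expect a genuine obstacle here: the argument is entirely combinatorial, and the only points requiring care are the conventions for binomial coefficients with negative arguments and the check that the written summation range coincides with the full support, so that Chu--Vandermonde applies verbatim for negative $w$ as well. An alternative would be to express \eqref{form-2} through the quantities $q_{d,w}$ of Lemma \ref{combi-0}, but the direct double-Vandermonde computation above is shorter and self-contained.
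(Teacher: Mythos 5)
Your proof is correct: the upper-index negation $\binom{w+p-2r-1}{p}=(-1)^p\binom{2r-w}{p}$ is applied properly, both Chu--Vandermonde convolutions are legitimate as polynomial identities in $w$ (so they hold for negative integer $w$ as well, and the stated range $0\le p\le r$ is indeed the full support because the lower indices $p$ and $r-p$ are nonnegative integers), the absorption identity $p\binom{2r-w}{p}=(2r-w)\binom{2r-w-1}{p-1}$ is valid including at $p=0$, and the final cancellation via $\binom{2r}{r}=2\binom{2r-1}{r-1}$ together with the separate $r=0$ check settles both cases. The paper proves the identity by the same underlying mechanism but packaged as a single generating-function computation: it writes $\xi_{r,w}$ as the coefficient of $t^r$ in the product of $f=\sum_{p\ge 0}(w-2p)\binom{w}{p}(-t)^p=w(1+t)(1-t)^{w-1}$ (the weight $w-2p$ being produced by a derivative of $(1-t)^w$, where you instead split into constant and linear parts and absorb the factor $p$) and $h=\sum_{p\ge 0}\binom{w-2r-1+p}{p}t^p=(1-t)^{2r-w}$ (the closed form being recycled from the proof of Lemma \ref{combi-0}), so that $f\cdot h=w(1+t)(1-t)^{2r-1}$ has a $w$-independent exponent and the coefficient of $t^r$ visibly vanishes for $r\ge 1$. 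Your two Vandermonde convolutions are exactly the coefficient extractions from $(1+t)^{2r}$ and $(1+t)^{2r-1}$ in disguise, so the arguments are equivalent in substance; what your version buys is self-containedness (no appeal to Lemma \ref{combi-0}) and explicitly named classical identities, while the paper's version avoids the sign bookkeeping of upper negation by keeping everything in terms of $(1-t)$.
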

\begin{proof}
First we denote the left hand side of equation (\ref{form-2}) by
$\xi_{r,w}$.
We consider the formal power series $f \in\zdop[[t]]$ given by:
\[ f = \sum_{p \geq 0} (w-2p)\binom{w}{p} (-t)^p=  w \sum_{p \geq 0}
\binom{w}{p} (-t)^p +2 \sum_{p \geq 0} \binom{w}{p}(-p) (-t)^p  \, . \]
Using the binomial equation $(1-t)^w=\sum_{p \geq 0} \binom{w}{p}
(-t)^p$, and its derivative with respect to $t$ we obtain
$\sum_{p \geq 0} \binom{w}{p}(-p) (-t)^p
= (-t) \frac{\dee}{\dee t} (1-t)^w =wt(1-t)^{w-1}$.
Therefore we have
\[  f = w(1-t)^w +2wt(1-t)^{w-1} = w(1+t)(1-t)^{w-1} \, .\]
Next we consider the formal power series $h \in \zdop[[t]]$ which we
define to be
\[ h= \sum_{p \geq 0} \binom{ (w-2r-1)+p}{p} t^p = g_{w-2r-1,w-2r-1} =
(1-t)^{2r-w} \,,  \]
where $g_{w-2r-1,w-2r-1}$ is the function defined in the proof of Lemma
\ref{combi-0}. Now we are able to compute $\xi_{r,w}$. Indeed, the
number $\xi_{r,w}$ is the coefficient of $t^r$ in $f \cdot h$.
Since $f \cdot h = w(1+t)(1-t)^{2r-1}$, we deduce from the binomial
formula that for $r \geq 1$ we have
\[\xi_{r,w} = w \left( (-1)^r\binom{2r-1}{r}
+(-1)^{r-1}\binom{2r-1}{r-1}   \right) = 0 \, .
\]
Since for $r=0$ the assertion also holds, we are done.
\end{proof}

\begin{lemma}\label{combi-1}
The rational numbers $r_{k,m}=\frac{1}{2^k k! \prod_{l=0}^{k-1}
(n+2m-4-2l)}$
fulfill the equation
\[ \sum_{k=0}^{d} \left(
\prod_{l=k}^{d-1} \frac{1}{(2l-2d)(n-2+2m-2d-2l)} \right) r_{k,m} =0 \,
\mbox{ for all } d \geq 1 \,. \]
\end{lemma}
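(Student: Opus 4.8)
The plan is to verify that the stated closed form satisfies the recurrence from \ref{harm-num}.(8); since the $k=d$ summand there has an empty product equal to $1$, that recurrence is exactly the displayed identity. To ease notation I would set $B=n+2m-4$ and $C=n-2+2m-2d$, so that $C=B-2(d-1)$, and evaluate the two products in the recurrence coefficient separately. The factor $\prod_{l=k}^{d-1}(2l-2d)$ telescopes immediately: since $l-d$ runs through $-(d-k),\dots,-1$, it equals $(-2)^{d-k}(d-k)!$. Substituting the closed form $r_{k,m}=\bigl(2^kk!\prod_{l=0}^{k-1}(B-2l)\bigr)^{-1}$, the powers of two combine as $(-2)^{d-k}2^k=(-1)^{d-k}2^d$, so that every summand acquires the common factor $2^{-d}$ together with the factorials $(d-k)!\,k!$ and a sign $(-1)^{d-k}$.

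The key simplification is to recognize that the product $\prod_{l=0}^{k-1}(B-2l)$ coming from $r_{k,m}$ and the product $\prod_{l=k}^{d-1}(C-2l)$ coming from the recurrence coefficient fit together. Reindexing via $C-2l=B-2(d-1+l)$ turns the second into $\prod_{s=d-1+k}^{2d-2}(B-2s)$, so the two are disjoint blocks of the single long product $\prod_{s=0}^{2d-2}(B-2s)$, separated by a gap of exactly $d-1$ consecutive factors $\prod_{s=k}^{k+d-2}(B-2s)$. Pulling the $k$-independent quantity $\bigl(2^d\prod_{s=0}^{2d-2}(B-2s)\bigr)^{-1}$ out of the sum and writing $\tfrac{1}{(d-k)!\,k!}=\tfrac1{d!}\binom{d}{k}$, the identity reduces to
\[ \sum_{k=0}^{d}(-1)^{d-k}\binom{d}{k}\,R(k)=0,\qquad R(k):=\prod_{i=0}^{d-2}(B-2k-2i). \]
To be safe about vanishing denominators I would treat $B$ (equivalently $n$, with $m,d$ fixed) as a formal indeterminate, prove the above as a polynomial identity, and specialise; this avoids any worry that $\prod_{s=0}^{2d-2}(B-2s)$ might vanish for special values of $n$.

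Finally, $R(k)$ is a polynomial in $k$ of degree exactly $d-1$ (one linear factor for each $i=0,\dots,d-2$), so the alternating sum is the $d$-th finite difference of a polynomial of degree strictly less than $d$ and therefore vanishes identically. Concretely, writing $B=2W$ gives $R(k)=2^{d-1}(d-1)!\binom{W-k}{d-1}$, and after the substitution $k\mapsto d-k$ the sum becomes $2^{d-1}(d-1)!\,q_{d,W-d}$ in the notation of Lemma \ref{combi-0}; read with the polynomial convention for the binomial coefficients this is precisely the finite-difference vanishing, so the generating-function computation of Lemma \ref{combi-0} supplies the conclusion. I expect the only real work to be the bookkeeping in the middle step, namely correctly identifying the gap block and confirming that it contains exactly $d-1$ factors, since it is this count that forces $\deg R=d-1<d$ and makes the finite-difference argument applicable; everything else is routine manipulation of products and binomial coefficients.
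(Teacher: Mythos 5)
Your argument is correct and follows essentially the same route as the paper: clear denominators against the full product $\prod_{s=0}^{2d-2}(B-2s)$, identify the complementary block of $d-1$ consecutive factors $\prod_{s=k}^{k+d-2}(B-2s)$, and reduce to an alternating binomial sum against a polynomial of degree $d-1$ in $k$. The only (cosmetic) difference is that you invoke the vanishing of the $d$-th finite difference of a degree-$(d-1)$ polynomial directly, whereas the paper reaches the same fact by combining the generating-function identity of Lemma \ref{combi-0} with a ``too many integer zeros'' polynomial-extension step; your handling of the binomial-coefficient convention and of possibly vanishing denominators is sound.
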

\begin{proof}
We obtain from Lemma \ref{combi-0} that $q_{d,w+d-2} = 0$ for all
integers $w \geq 2-d$. 
We may write
\[ q_{d,w+d-2} = d \sum_{k=0}^d \frac{(-1)^k}{(d-k)!k!}
\prod_{l=k}^{k+d-2}(w+l)\, . \]
Considered as a polynomial in $w$ of degree at most $d-1$,
$q_{d,w+d-2}$ can have at most $d-1$ zeros. Hence it is identically zero
for all $w \in \qdop$. 
Setting $w=2-m-\frac{n}{2}$ we obtain therefore that
\[ \begin{array}{rcl}
0 &=& \frac{(-2)^{d-1}}{(-2)^d \prod_{l=0}^{2d-2} (n+2m-4-2l)}
\sum\limits_{k=0}^{d} \frac{(-1)^k}{(d-k)! k!}
\prod_{l=k}^{k+d-2} (w+l) \\ 
&=& \frac{1}{(-2)^d \prod_{l=0}^{2d-2} (n+2m-4-2l)}
\sum\limits_{k=0}^{d} \frac{(-1)^k}{(d-k)! k!}
\prod_{l=k}^{k+d-2} (n+2m-4-2l)     \\
&=& \frac{1}{(-2)^d} \sum\limits_{k=0}^{d} \frac{(-1)^k}{(d-k)! k!}
\frac{1}{\prod_{l=k+d-1}^{2d-2} (n+2m-4-2l)} 
\frac{1}{\prod_{l=0}^{k-1} (n+2m-4-2l)}     \\
&=& \frac{1}{(-2)^d} \sum\limits_{k=0}^{d} \frac{(-1)^k}{(d-k)! k!}
\frac{1}{\prod_{l=k}^{d-1} (n-2+2m-2d-2l)} 
\frac{1}{\prod_{l=0}^{k-1} (n+2m-4-2l)}     \\
& =& \sum_{k=0}^{d} \left(
\prod_{l=k}^{d-1} \frac{1}{(2l-2d)(n-2+2m-2d-2l)} \right) \left(
\frac{1}{2^k k! \prod_{l=0}^{k-1} (n+2m-4-2l)} \right)
\end{array}\]
This is the stated assertion.
\end{proof}

\begin{lemma}\label{pm}
The polynomials $p_m \in \qdop[c^2]$ which are implicitly defined by 
\[ \sum_{k=0}^m \frac{p_{m-k}}{a_{k,m}} = \frac{c^{2m}}{(2m)!}
\quad \mbox{ with the integers } \quad a_{k,m}=2^k k! 
\prod_{l=1}^{k} (n+4m-2k-2l) \]
from \ref{harm-num} (6) are explicitly given by
\begin{equation}\label{form-1}
 p_{m}(c) = \sum_{k=0}^m \frac{(-1)^k c^{2m-2k}}{(2m-2k)!k!2^k
\prod_{l=0}^{k-1} (n+4m-4-2l)}  \, . 
\end{equation}
\end{lemma}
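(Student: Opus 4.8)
The plan is to take the recursion
\[ \sum_{k=0}^m \frac{p_{m-k}}{a_{k,m}} = \frac{c^{2m}}{(2m)!} \]
as the definition of $p_m$ and to verify that the closed form (\ref{form-1}) satisfies it. Since $a_{0,m}=1$, this relation is triangular in the degree: it expresses $p_m$ through $p_0,\dots,p_{m-1}$, so it has a unique solution (with $p_0=1$), and it suffices to check that (\ref{form-1}) solves it. Substituting (\ref{form-1}) for every $p_{m-k}$ and collecting the coefficient of $c^{2m-2r}$ for $0\le r\le m$, the term of degree $2m-2r$ in $p_{m-k}$ forces its internal index to be $r-k$, so only $0\le k\le r$ contribute. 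For $r=0$ only $k=0$ survives and one reads off the coefficient $1/(2m)!$, matching the right-hand side; the whole content is therefore the vanishing of the coefficient of $c^{2m-2r}$ for $r\ge 1$.

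Writing out that coefficient, pulling the powers of $2$ and the factor $1/(2m-2r)!$ out front, and setting $M=\tfrac n2+2m$, the two arithmetic-progression products $\prod_{l=1}^{k}(n+4m-2k-2l)$ coming from $a_{k,m}$ and $\prod_{l=0}^{r-k-1}(n+4m-4k-4-2l)$ coming from (\ref{form-1}) merge. Indeed they assemble into the descending run $(M-k-1)(M-k-2)\cdots(M-k-r-1)$ of $r+1$ consecutive factors with the single factor $M-2k-1$ deleted; restoring it to the numerator turns the claim into the rational-function identity
\[ S_r \;=\; \sum_{k=0}^{r}(-1)^{r-k}\binom{r}{k}\,\frac{M-2k-1}{\prod_{i=1}^{r+1}(M-k-i)} \;=\;0 \qquad (r\ge 1). \]

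I would prove $S_r\equiv 0$ as an identity of rational functions in $M$, in the same spirit as Lemmas \ref{combi-0} and \ref{pre-pm}. The summands have only simple poles, at the integers $M=s$ with $1\le s\le 2r+1$, so it is enough to show that every residue vanishes and that $S_r\to 0$ as $M\to\infty$; the latter is a degree count, since each summand decays like $M^{-r}$ with $r\ge 1$. The residue at $M=s$ comes from the pairs $(k,i)$ with $k+i=s$ and, after using $\tfrac1{(s-k-1)!\,(r+1-s+k)!}=\tfrac1{r!}\binom{r}{s-k-1}$, is a constant multiple of
\[ \sum_{k}\binom{r}{k}\binom{r}{s-k-1}\,(s-2k-1). \]
Splitting $s-2k-1=(s-k-1)-k$ and applying $k\binom{r}{k}=r\binom{r-1}{k-1}$ and $(s-k-1)\binom{r}{s-k-1}=r\binom{r-1}{s-k-2}$, each half becomes $r\binom{2r-1}{s-2}$ by Vandermonde's convolution, so the two halves cancel. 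Hence all residues vanish and $S_r\equiv0$, which completes the verification.

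The main obstacle is the bookkeeping of the second paragraph: correctly merging the two shifted products into one run of consecutive factors and identifying the deleted factor as exactly $M-2k-1$; once this is done, the remaining identity is a clean Vandermonde cancellation. As a consistency check on the shape of (\ref{form-1}), note that $p_m(c)\|v\|^{2m}\|w\|^{2m}=\sum_{h\in\Bcal_{2m}^\harm}h(v)h(w)$ from Lemma \ref{aux-32} is the reproducing kernel of $\Harm_{2m}$, i.e. the harmonic projection \ref{harm-num}.(8) applied to the kernel $\langle v,w\rangle^{2m}/(2m)!$ of $A_{2m}$ in one variable; expanding $\Delta_w^k\langle v,w\rangle^{2m}$ reproduces (\ref{form-1}) directly, which is a useful independent route should the direct bookkeeping prove unwieldy.
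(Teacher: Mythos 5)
Your proposal is correct, and its skeleton is the same as the paper's: take (\ref{form-1}) as a candidate, substitute it into the defining relation (which is triangular, hence determines the $p_m$ uniquely), and reduce the vanishing of the coefficient of $c^{2m-2r}$ for $r\ge 1$ to a one-parameter combinatorial identity. Your identity $S_r=0$ is exactly the paper's identity: with $M=w+1$, $w=\tfrac n2+2m-1$, and $k=r-p$ one has $S_r=r!\,t_r$, and after clearing the common denominator $\prod_{i}(M-k-i)$ it becomes the sum $\sum_{p}(-1)^p\binom{w}{r-p}\binom{w+p-2r-1}{p}(w+2p-2r)$ of Lemma \ref{pre-pm}. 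Where you genuinely diverge is in how this identity is proved. The paper clears denominators first and then evaluates the resulting polynomial identity by a generating-function computation: the sum is the coefficient of $t^r$ in $w(1+t)(1-t)^{w-1}\cdot(1-t)^{2r-w}=w(1+t)(1-t)^{2r-1}$, which vanishes by the symmetry $\binom{2r-1}{r}=\binom{2r-1}{r-1}$. You instead keep $S_r$ as a rational function of $M$, observe it has at most simple poles and decays like $M^{-r}$ at infinity, and kill each residue by a Vandermonde convolution after the split $s-2k-1=(s-k-1)-k$; this is a clean and complete argument (a sum of functions with simple poles has at most simple poles, so vanishing residues force $S_r$ to be a polynomial, hence zero). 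The paper's route is shorter once the right generating functions are guessed; yours is more mechanical and self-checking, at the cost of the bookkeeping you flag. One shared cosmetic caveat: both arguments establish the identity for generic $M$ (resp.\ $w$) and rely on the fact that it is a polynomial identity after clearing denominators, so the specific evaluation at $M=\tfrac n2+2m$ is legitimate even in the boundary case $n=2$, $r=m$, where an individual factor such as $M-2k-1$ (or the paper's $\prod_{q=0}^{2r}(w-q)$) degenerates; it is worth saying this explicitly. Your closing remark, that $p_m$ is the reproducing kernel of $\Harm_{2m}$ obtained by applying the harmonic projection of \ref{harm-num}.(8) to $\sca{v}{w}^{2m}/(2m)!$ in one variable, is indeed an independent and arguably more conceptual derivation of (\ref{form-1}), consistent with Lemma \ref{aux-32}.
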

\begin{proof}
We take the formula (\ref{form-1}) as the definition of $p_m$ and compute
the sum
$s_m:=\sum_{k=0}^m \frac{p_{m-k}}{a_{k,m}}$.
By definition $s_m$ is a polynomial in $\qdop[c^2]$ of degree at most
$2m$. We write $s_m = \sum_{r=0}^m \frac{t_r}{4^r (2m-2r)!} c^{2m-2r}$.
We find that
\[ t_r  = \sum_{p=0}^r \frac{(-1)^p}{
p!(r-p)!
\left( \prod_{l=1}^{r-p} (w+p-r-l) \right)
\left( \prod_{q=0}^{p-1} (w+2p-2r-2-q) \right) }  \, ,
\]
with $w:=\frac{n}{2}+2m-1$.
Up to the factor $(w+2p-2r)$ the two products in the denominator give
$\prod_{l=0}^{r} (w+p-r-l)$. So we get
\[ t_r =  \sum_{p=0}^r \frac{(-1)^p (w+2p-2r)}{
p!(r-p)!
\prod_{l=0}^{r} (w+p-r-l) } \]
Multiplying both side with the factor $\prod_{q=0}^{2r}(w-q) \ne 0$ we
get
\[ \left( \prod_{q=0}^{2r}(w-q) \right)  t_r =
 \sum_{p=0}^r \frac{(-1)^p (w+2p-2r) \prod_{q=0}^{r-p-1} (w-q)
\prod_{q=0}^{p-1}(w+p-2r-1-q)
}{
p!(r-p)!}\]
As usual, we define for a non-negative integer $k$ for all complex
numbers $z$ the binomial coefficient
$\binom{z}{k} =\frac{ \prod_{a=0}^{k-1} (z-a)}{k!}$.
Using this notation we obtain
\[ \left( \prod_{q=0}^{2r}(w-q) \right)  t_r =\sum_{p=0}^r 
(-1)^p \binom{w}{r-p} \binom{w+p-2r-1}{p} (w+2p-2r) \, .\]
Both side of this equation are polynomials of degree at most $2r$. By
Lemma \ref{pre-pm} the right hand side is zero for $r \geq 1$ and all
integers $w$. So it is zero for all $w$. We conclude that $t_r =0$ for
all $r \geq 1$. 
We finish the proof by checking $t_0=1$ which is obvious.
\end{proof}

\end{document}